\tikzset{
every picture/.style={thin, >=stealth},
club/.style={smooth, ultra thick},
eg/.style={blue, thick}
}
\crefname{ineq}{Ineq.}{Ineqs.}
\crefname{itm}{}{}
\crefname{rec}{Recurrence}{Recurrences}
\newtheorem{theorem}{Theorem}[section]
\newtheorem{corollary}[theorem]{Corollary}
\newtheorem{lemma}[theorem]{Lemma}
\newtheorem{proposition}[theorem]{Proposition}
\theoremstyle{definition}
\newtheorem{example}[theorem]{Example}
\numberwithin{equation}{section}
\numberwithin{figure}{section}
\DeclareMathOperator\Cl{\mathrm{Cl}}
\title[Geometry of zero limits of $(1,2)$-polynomials]{Geometry of limits of zeros \\[5pt] 
of polynomial sequences of type $(1,2)$}
\author[D.G.L. Wang]{David G.L. Wang$^\dag$$^\ddag$}
\address{
$^\dag$School of Mathematics and Statistics, Beijing Institute of Technology, 102488 Beijing, P. R. China\\
$^\ddag$Beijing Key Laboratory on MCAACI, Beijing Institute of Technology, 102488 Beijing, P. R. China}
\email{glw@bit.edu.cn}
\author[J.J.R. Zhang]{Jerry J.R. Zhang$^\dag$}
\address{
$^\dag$School of Mathematics and Statistics, Beijing Institute of Technology, 102488 Beijing, P. R. China}
\email{jrzhang.combin@gamil.com}
\thanks{Wang is supported by the General Program of National Natural Science Foundation of China (Grant No.\ 11671037). This paper is finished when the first author is a visiting scholar at MIT}
\keywords{limit of zeros; real-rootedness; recurrence; root distribution}
\subjclass[2010]{03D20, 26C10, 30C15}
\begin{document}

\begin{abstract}   
We study the root distribution of some univariate polynomials satisfying a recurrence of order two with linear and quadratic polynomial coefficients. We show that the set of non-isolated limits of zeros of the polynomials is the closure of an arc,  a circle, an interval or its exterior under the real line, the union of at most two different shapes of the above cases expect the union of an arc and a circle, and some degenerate forms.
\end{abstract}

\maketitle

\section{Introduction}

Despite its long history of research, 
the root distribution of polynomials attracts attention.
Many polynomial sequences have not only beautiful root distributions, 
but also applications in analysis, combinatorics, and probability theory,
see Stanley \cite{StaW} and \cite[\S 4]{Sta00} for details.
The limiting distribution of zeros of polynomial has also
been receiving wide attention. 
Beraha, Kahane and Weiss~\cite{BKW75,BKW78} introduced the concept
of limit of zeros for its limiting behaviour. 
They~\cite{BKW80} show the chromatic polynomials 
of two families of maps have limit points of the form $2\brk1{1+\cos(2\pi/n)}$.
Sokal~\cite{Sok04} show that the zeros of chromatic polynomials 
of generalized theta graphs is dense in the whole complex plane 
with a possible exception of the disc $\abs{z-1}<1$. 
Boyer and Goh~\cite{BG07,BG08} show certain ``zero attractors'' of the zero set 
of Euler polynomials and Appell polynomials are highly related to the Szeg\"o curve.
See \cite{GHR09,BM06} for more research on limiting root distribution of polynomials. 

Gross, Mansour, Tucker and the first author~\cite{GMTW16-01,GMTW16-10}
study the root distribution of polynomials satisfying the recurrence
\begin{equation}\label[rec]{rec:AB}
W_n(z) = A(z)W_{n-1}(z)+B(z)W_{n-2}(z),
\end{equation}
where one of the coefficient polynomials $A(z)$ and $B(z)$ 
is linear and the other is a constant.
Wang and Zhang \cite{WZ18X} obtain that 
there are exactly four possible geometric shapes of non-isolated limits 
when both $A(z)$ and $B(z)$ are linear.
Orthogonal polynomials and quasi-orthogonal polynomials have closed relations with
\cref{rec:AB};
see~\cite{ARR99B,BDR04}.
Jin and Wang~\cite{JW17X} characterized common zeros of the polynomials $W_n(z)$ 
for general $A(z)$ and $B(z)$.
The root distribution of the polynomials generated by the function
\[
\sum_{n\ge0}W_n(z)t^n=\frac{1}{1-A(z)t-B(z)t^2}
\]
has been investigated in \cite{Tra14}, 
in which Tran found an algebraic curve containing the zeros 
of all polynomials $W_n(z)$ with large subscript $n$.

This paper is organized as follows. 
Our main results are \cref{thm:spade,thm:club}, 
which are algebraic and geometric descriptions 
of the limiting root distribution of polynomials $W_n(z)$ when $A(z)$ 
is linear and $B(z)$ is quadratic in $z$. 
Note that the zeros of a polynomial are symmetric about the real line, 
so are the limits of zeros. 
It turns out that the set of non-isolated limits of zeros of all $W_n(z)$ 
for a given linear $A(z)$ and a given quadratic $B(z)$, 
subject to some standard assumptions for $W_0(z)$, $W_1(z)$, $A(z)$ and $B(z)$,
has six possible geometric shapes; see \cref{thm:shape:12}.
Some genus polynomials from topological graph theory satisfy \cref{rec:AB} 
and serve as examples; see \cref{ex:ladder,ex:P4}.

\section{The set of limits of zeros}\label{sec:geo}
Let $\{W_n(z)\}_{n\ge0}$ be a polynomial sequence satisfying the recurrence
\begin{equation}\label[rec]{rec2}
W_n(z) = A(z)W_{n-1}(z)+B(z)W_{n-2}(z),
\end{equation}
where $A(z)$ and $B(z)$ are polynomials.
In this paper, we consider the polynomial sequence $\{W_n(z)\}_{n\ge0}$ 
satisfying the recurrence
\begin{equation}\label[rec]{rec2:linear}
W_n(z) = (az+b)W_{n-1}(z)+(cz^2+dz+e)W_{n-2}(z),
\end{equation}
with the polynomials $W_0(z)$ and $W_1(z)$ given. 
A number $z^*\in\mathbb{C}$ 
is said to be a {\em limit of zeros} of a sequence $\{W_n(z)\}_n$ of polynomials
if there is a zero $z_n$ of the polynomial $W_n(z)$ for each $n$ such that $\lim_{n\to\infty}z_n=z^*$. 
For any complex number $z=re^{i\theta}$ with $\theta\in(-\pi,\pi]$,
we use the square root notation~$\sqrt{z}$ to denote 
the number $\sqrt{r}e^{i\theta/2}$, 
which lies in the right half-plane $\theta\in(-\pi/2,\,\pi/2]$.
%
%
We consider a sequence $\{W_n(z)\}_n$ of polynomials 
defined by \cref{rec2:linear}, where $a,b,c,d,e\in\mathbb{R}$ and $ac\ne0$.
Denote by $x_A=-b/a$ the zero of $A(z)$.
We employ the notation
\begin{align*}
\Delta(z)
&=A(z)^2+4B(z)
=(a^2+4c)z^2+(2ab+4d)z+(b^2+4e),\\
\lambda_\pm(z)&=\frac{A(z)\pm\sqrt{\Delta(z)}}{2},\quad\text{and}\\[4pt]
\alpha_\pm(z)&=
\frac{W_0(z)\sqrt{\Delta(z)}\pm (2W_1(z)-W_0(z)A(z))}{2\sqrt{\Delta(z)}}
\qquad\text{if $\Delta(z)\ne0$}.
\end{align*}


We denote the set of non-isolated limits
of zeros of the polynomials $W_n(z)$ by $\clubsuit$,
and denote the set of isolated limits of zeros by $\spadesuit$.
The clover symbol $\clubsuit$ is adopted
for the leaflets of a clover are not alone, while the spade symbol $\spadesuit$ 
appearing as a single leaflet represents isolation in comparison.
Every zero of the function $\Delta(z)$ is a non-isolated limit of zeros. 

\begin{lemma}[Beraha et al.]\label{lem:BKW}
Let $\{W_n(z)\}$
be the normalized polynomial sequence satisfying \cref{rec2:linear}.
Suppose that it satisfies the non-degeneracy conditions 
\begin{enumerate}[label=\emph{(N-\roman*)}]
\item\label{cond:rec2}
the sequence $\{W_n(z)\}$ does not satisfy a recurrence of order less than two, and
\item\label{cond:f<>0}
there is no $\omega\in\mathbb{C}$ with $\abs{\omega}=1$ 
such that $\lambda_+(z)\equiv\omega\lambda_-(z)$.
\end{enumerate}
Then $\clubsuit=\{z\in\mathbb{C}\colon \abs{\lambda_+(z)}=\abs{\lambda_-(z)}\}$ and 
$\spadesuit=\spadesuit_-\cup\spadesuit_+$,
where
\begin{align*}
\spadesuit_-
&=\{z\in\mathbb{C}\colon \Delta(z)\ne0,\ \alpha_-(z)=0,\ \abs{\lambda_-(z)}
>\abs{\lambda_+(z)}\},\quad\text{and}\quad\\
\spadesuit_+
&=\{z\in\mathbb{C}\colon \Delta(z)\ne0,\ \alpha_+(z)=0,\ \abs{\lambda_+(z)}
>\abs{\lambda_-(z)}\}.
\end{align*}
\end{lemma}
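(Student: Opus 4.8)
The plan is to reduce the statement to the Beraha--Kahane--Weiss theorem on limits of zeros of a sequence $f_n=\sum_i\alpha_i\lambda_i^n$ of analytic functions~\cite{BKW75,BKW78}. The first step is the Binet-type closed form for $W_n$. On the open set $\Omega=\{z\in\mathbb{C}\colon\Delta(z)\ne0\}$ the polynomial $t^2-A(z)t-B(z)$ in~$t$ has the two distinct roots $\lambda_\pm(z)$, so every solution of \cref{rec2:linear} is a $\mathbb{C}(z)$-linear combination of $\{\lambda_+(z)^n\}_n$ and $\{\lambda_-(z)^n\}_n$; matching the values at $n=0$ and $n=1$ and solving the resulting $2\times2$ system gives
\[
W_n(z)=\alpha_+(z)\,\lambda_+(z)^n+\alpha_-(z)\,\lambda_-(z)^n,\qquad z\in\Omega,
\]
with $\alpha_\pm$ exactly as defined before the lemma. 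Fixing a holomorphic branch of $\sqrt{\Delta}$ on a neighbourhood of any point of $\Omega$ off the set $\{z\colon\Delta(z)\in(-\infty,0)\}$ makes $\lambda_\pm$ and $\alpha_\pm$ holomorphic there, and the branch swap $\sqrt{\Delta}\mapsto-\sqrt{\Delta}$ interchanges $\lambda_+\leftrightarrow\lambda_-$ and $\alpha_+\leftrightarrow\alpha_-$ simultaneously, so the three sets in the statement do not depend on the branch and are globally well defined.

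Next I would verify that \ref{cond:rec2} and \ref{cond:f<>0} are exactly the non-degeneracy hypotheses the theorem needs. One has $\lambda_+\not\equiv\lambda_-$, for otherwise $\Delta\equiv0$ and then $\lambda_+\equiv1\cdot\lambda_-$, contradicting \ref{cond:f<>0}; in particular $\Delta$ is a non-zero polynomial of degree at most~$2$, so its zero set $Z=\{z\colon\Delta(z)=0\}$ is finite. Also $\lambda_\pm\not\equiv0$, since $\lambda_+\lambda_-=-B(z)=-(cz^2+dz+e)\not\equiv0$ because $c\ne0$. Moreover $\alpha_+\not\equiv0$ and $\alpha_-\not\equiv0$: if, say, $\alpha_-\equiv0$, then $W_n=\alpha_+\lambda_+^n$ for all $n$ (and these are never the zero polynomial since $\alpha_+,\lambda_+\not\equiv0$), whence $W_{n+1}/W_n$ is the fixed rational function $\lambda_+$ and $\{W_n\}$ satisfies a recurrence of order one, contradicting \ref{cond:rec2}. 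Finally, $\lambda_+/\lambda_-$ is not a constant of modulus~$1$ --- this is \ref{cond:f<>0} verbatim.

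With these facts the Beraha--Kahane--Weiss theorem applies on each component of $\Omega\setminus\{z\colon\Delta(z)\in(-\infty,0)\}$, and by the branch-invariance above its conclusion transfers to all of~$\Omega$: a point $z^*\in\Omega$ is a limit of zeros of $\{W_n\}$ if and only if either (i) $\abs{\lambda_+(z^*)}=\abs{\lambda_-(z^*)}$, in which case $z^*$ is a non-isolated limit, or (ii) one of $\lambda_\pm(z^*)$ strictly dominates the other in modulus and the matching coefficient $\alpha_\pm(z^*)$ vanishes, in which case $z^*$ lies in the discrete zero set of $\alpha_+\alpha_-$ and, by the strict inequality, has a neighbourhood free of type-(i) points, hence is an isolated limit. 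This yields $\clubsuit\cap\Omega=\{z\in\Omega\colon\abs{\lambda_+(z)}=\abs{\lambda_-(z)}\}$ and $\spadesuit=\spadesuit_-\cup\spadesuit_+$. It remains to deal with the finitely many points of~$Z$: at $z\in Z$ one has $\lambda_+(z)=\lambda_-(z)=A(z)/2$, so $\abs{\lambda_+(z)}=\abs{\lambda_-(z)}$; and since every zero of $\Delta$ is a non-isolated limit of zeros (as noted in the text, via the repeated-root form $W_n=(\beta_0+\beta_1 n)\lambda^n$), each such~$z$ belongs to~$\clubsuit$ and to no isolated class. Hence $\clubsuit=\{z\in\mathbb{C}\colon\abs{\lambda_+(z)}=\abs{\lambda_-(z)}\}$ on all of~$\mathbb{C}$, as claimed.

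The main obstacle is not a computation but the careful matching of hypotheses together with the branch-cut bookkeeping: one must ensure the analyticity assumptions of the Beraha--Kahane--Weiss theorem hold on genuine domains (hence the local holomorphic branches of $\sqrt{\Delta}$ and the reduction modulo the branch swap $\sqrt{\Delta}\mapsto-\sqrt{\Delta}$), and one must invoke not merely the theorem's description of the limit set but its finer assertion that a point with a tie among the dominant roots is actually a \emph{non-isolated} limit, since only this gives the clean split between $\clubsuit$ and $\spadesuit$. By comparison the algebraic non-degeneracy checks --- in particular deducing ``order less than two'' from $\alpha_-\equiv0$ --- are routine.
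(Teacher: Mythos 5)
The paper itself gives no proof of this lemma: it is quoted (with the attribution ``Beraha et al.'') as the Beraha--Kahane--Weiss theorem specialized to \cref{rec2:linear}, which is exactly the route you take. Your write-up correctly supplies the details that specialization requires --- the Binet form on $\{z\colon\Delta(z)\ne0\}$, the check that \ref{cond:rec2} and \ref{cond:f<>0} give $\alpha_\pm\not\equiv0$ and rule out $\lambda_+\equiv\omega\lambda_-$, the branch-swap invariance of the three sets, the treatment of zeros of $\Delta$ (a fact the paper also just asserts), and the argument separating isolated from non-isolated limits --- so it is correct and essentially the same approach, only more explicit than the paper's bare citation.
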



If Condition \ref{cond:rec2} is not satisfied, 
i.e., if the sequence satisfies a recurrence of order one,
then $W_n(z)=W_1(z)^n/W_0(z)^{n-1}$
for $n\ge 1$. Taking $n=2$ and by using 
\cref{rec2:linear} we obtain 
\[
W_1^2(z)-A(z)W_0(z)W_1(z)-B(z)W_0^2(z)=0.
\]
Condition \ref{cond:f<>0} is not satisfied
if and only if there is $\omega\in\mathbb{C}$ with $\abs{\omega}=1$
such that $\lambda_+(z)\equiv\omega\lambda_-(z)$. Then 
$A^2(z)=-(1+\omega)^2B(z)/\omega$.
It follows that $A(z)$ and $B(z)$ share a common zero. 
From now on we make the following assumptions:
\begin{enumerate}
\itemsep 5pt
\item\label[itm]{itm:Ni}
the polynomial $W_1^2(z)-A(z)W_0(z)W_1(z)-B(z)W_0^2(z)$ is not identically zero;
\item\label[itm]{itm:AB:share}
the polynomials $A(z)$ and $B(z)$ do not share a zero; and
\item\label[itm]{itm:W01:share}
the polynomials $W_0(z)$ and $W_1(z)$ do not share a zero.
\end{enumerate}
We say that a sequence $\{W_n(z)\}$ of polynomials
satisfying the above assumptions is \emph{general}. 
Note that \cref{itm:Ni,itm:AB:share} guarantee
the non-degeneracy Conditions \ref{cond:rec2} and \ref{cond:f<>0}.
We use the notation $\overline{z}$ for a number $z$
to denote the complex conjugate of $z$. 

\begin{theorem}\label{thm:spade}
Let $\{W_n(z)\}_{n\ge0}$ be a general sequence of polynomials 
satisfying \cref{rec2:linear}.
Then the set of isolated limits of their zeros is
\begin{equation}\label{il}
\spadesuit=
\brk[c]4{z\in\mathbb{C}\colon g(z)=0\text{ and }
\Re\brk3{\frac{W_1(z)\overline{A(z)}}{W_0(z)}}<\frac{\abs{A(z)}^2}{2}},
\end{equation}
where 
$g(z)=W_1^2(z)-A(z)W_0(z)W_1(z)-B(z)W_0^2(z)$.
\end{theorem}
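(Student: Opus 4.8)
The plan is to derive the description of $\spadesuit$ directly from Lemma~\ref{lem:BKW}, by rewriting the two defining conditions of $\spadesuit_\pm$ in terms of the polynomial $g(z)$ and a single inequality, and then checking that the two halves $\spadesuit_-$ and $\spadesuit_+$ glue together to give exactly the set in \cref{il}. First I would record the elementary algebraic identity
\[
\alpha_+(z)\,\alpha_-(z)=\frac{W_0(z)^2\Delta(z)-\brk1{2W_1(z)-W_0(z)A(z)}^2}{4\Delta(z)}
=\frac{-4\brk1{W_1(z)^2-A(z)W_0(z)W_1(z)-B(z)W_0(z)^2}}{4\Delta(z)}=\frac{-g(z)}{\Delta(z)},
\]
valid whenever $\Delta(z)\ne0$, using $\Delta=A^2+4B$. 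Since $\alpha_+(z)+\alpha_-(z)=W_0(z)$, the vanishing of $\alpha_-$ or of $\alpha_+$ at a point with $\Delta(z)\ne0$ is, under the generality hypotheses, equivalent to $g(z)=0$ together with $W_0(z)\ne0$ (note that if $\Delta(z)\ne0$ and $g(z)=0$ then $W_0(z)=\alpha_++\alpha_-\ne0$, for otherwise both $\alpha_\pm$ would vanish, forcing $2W_1(z)-W_0(z)A(z)=W_0(z)\sqrt{\Delta(z)}$ and hence $W_1(z)=0$, contradicting \cref{itm:W01:share}). So the ``$g(z)=0$'' part of \cref{il} is immediate; what remains is to show that the surviving inequality $\Re\brk1{W_1(z)\overline{A(z)}/W_0(z)}<\abs{A(z)}^2/2$ is exactly the condition that $z$ lies in $\spadesuit_-$ or in $\spadesuit_+$, i.e.\ that $z$ is a zero of $g$ at which $\Delta(z)\ne0$ and the two moduli $\abs{\lambda_\pm(z)}$ are \emph{unequal}.

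The key computation is the following. Write $u=2W_1(z)-W_0(z)A(z)$, so that $\alpha_\pm=\brk1{W_0\sqrt{\Delta}\pm u}/(2\sqrt{\Delta})$ and $\lambda_\pm=\brk1{A\pm\sqrt{\Delta}}/2$. When $g(z)=0$ we have $u^2=W_0^2\Delta$, hence $u=\pm W_0\sqrt{\Delta}$ for one choice of sign; say $u=\varepsilon W_0\sqrt{\Delta}$ with $\varepsilon\in\{+1,-1\}$. Then $\alpha_{-\varepsilon}=0$ and $\alpha_{\varepsilon}=W_0$, so $z\in\spadesuit$ iff $\abs{\lambda_{-\varepsilon}(z)}>\abs{\lambda_{\varepsilon}(z)}$, i.e.\ iff $\abs{A-\varepsilon\sqrt{\Delta}}>\abs{A+\varepsilon\sqrt{\Delta}}$. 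Squaring, this is $-\varepsilon\Re\brk1{A\overline{\sqrt{\Delta}}}>\varepsilon\Re\brk1{A\overline{\sqrt{\Delta}}}$, i.e.\ $\varepsilon\Re\brk1{A(z)\overline{\sqrt{\Delta(z)}}}<0$. Now I would eliminate $\varepsilon$ and $\sqrt{\Delta}$: from $u=\varepsilon W_0\sqrt{\Delta}$ we get $\varepsilon\sqrt{\Delta}=u/W_0=\brk1{2W_1-W_0A}/W_0$, so
\[
\varepsilon\,\Re\brk1{A(z)\overline{\sqrt{\Delta(z)}}}
=\Re\brk4{A(z)\,\overline{\brk4{\frac{2W_1(z)-W_0(z)A(z)}{W_0(z)}}}}
=2\,\Re\brk4{\frac{W_1(z)\overline{A(z)}}{W_0(z)}}-\abs{A(z)}^2,
\]
using $\overline{W_0}/\abs{W_0}^2=1/\overline{W_0}$ only formally — more precisely, multiply numerator and denominator inside the real part by $W_0$ and note $\Re(A\overline{W_0 A}/|W_0|^2)=|A|^2$ wait, better: $\Re\brk1{A\cdot\overline{(2W_1-W_0A)/W_0}}=\Re\brk1{2A\overline{W_1}/\overline{W_0}}-\Re\brk1{A\overline A\,\overline{W_0}/\overline{W_0}}=2\Re\brk1{A\overline{W_1}/\overline{W_0}}-\abs A^2$, and $\Re\brk1{A\overline{W_1}/\overline{W_0}}=\Re\brk1{\overline{A}\,W_1/W_0}$ by conjugation. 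Hence the condition $\varepsilon\Re\brk1{A\overline{\sqrt\Delta}}<0$ is precisely $2\Re\brk1{W_1(z)\overline{A(z)}/W_0(z)}-\abs{A(z)}^2<0$, which is the inequality in \cref{il}. Assembling: $z\in\spadesuit$ iff $\Delta(z)\ne0$, $g(z)=0$, and this inequality holds; and I would note separately that when $\Delta(z)\ne0$ and $g(z)=0$ the inequality already forces the two moduli to be unequal (the borderline case $\Re=\abs A^2/2$ is the only way to get $\abs{\lambda_+}=\abs{\lambda_-}$), so that on the zero set of $g$ away from $\Delta=0$ the inequality alone cleanly separates $\spadesuit$ from $\clubsuit$.

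Two loose ends need care. First, points where $\Delta(z)=0$: by Lemma~\ref{lem:BKW} every zero of $\Delta$ is a non-isolated limit, hence cannot lie in $\spadesuit$; I must check that such points are automatically excluded by \cref{il}, i.e.\ that if $\Delta(z)=0$ then either $g(z)\ne0$ or the inequality fails. At a zero of $\Delta$ we have $g=W_1^2-AW_0W_1-BW_0^2$ and $A^2=-4B$, so $g=W_1^2-AW_0W_1+\tfrac14A^2W_0^2=\brk1{W_1-\tfrac12AW_0}^2$; thus $g(z)=0$ forces $W_1(z)=\tfrac12A(z)W_0(z)$, whence $\Re\brk1{W_1\overline A/W_0}=\tfrac12\abs A^2$, so the strict inequality in \cref{il} fails. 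Good — the $\Delta=0$ points are harmless. Second, points where $W_0(z)=0$: here the quotient in \cref{il} is undefined, but I argued above that $g(z)=0$ with $\Delta(z)\ne0$ already implies $W_0(z)\ne0$, and the $\Delta(z)=0$ case is excluded by the previous remark, so \cref{il} is well-posed on the entire zero set of $g$. The main obstacle is purely bookkeeping: getting the conjugates and the sign of $\varepsilon$ to cancel correctly in the passage from $\varepsilon\Re\brk1{A\overline{\sqrt\Delta}}<0$ to the stated inequality, and making sure the degenerate loci $\Delta=0$ and $W_0=0$ are handled so that the single clean formula \cref{il} is exactly right with no spurious or missing points.
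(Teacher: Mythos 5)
Your argument is correct and follows essentially the same route as the paper's own proof: both start from Lemma~\ref{lem:BKW}, use the identity $\alpha_+(z)\alpha_-(z)=-g(z)/\Delta(z)$ (the paper's \cref{g:d}) together with $g=h^2/4$ at zeros of $\Delta$ where $h=2W_1-W_0A$, exclude $W_0(z)=0$ via assumption \cref{itm:W01:share}, and convert the comparison of $\abs{\lambda_\pm(z)}$ into the sign of $\Re\brk1{\overline{A(z)}h(z)/W_0(z)}$. Your sign parameter $\varepsilon$ and single chain of equivalences is only a cosmetic repackaging of the paper's two inclusions $\spadesuit_\pm\subseteq S$ and $S\subseteq\spadesuit$.
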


\begin{proof}
Suppose that $W_0(z)\ne 0$. We notice that 
$\Re\brk1{W_1(z)\overline{A(z)}/W_0(z)}<\abs{A(z)}^2/2$
if and only if
\begin{equation}\label[ineq]{ineq:Re}
\Re\brk1{\overline{A(z)}h(z)/W_0(z)}<0,
\end{equation}
where $h(z)=2W_1(z)-W_0(z)A(z)$.
We shall show that $\spadesuit=S$, where 
\[
S=\brk[c]3{z\in\mathbb{C}\colon g(z)=0,\,
\Re\brk3{\frac{\overline{A(z)}h(z)}{W_0(z)}}<0}.
\]
Note that if $W_0(z)=0$, then $g(z)=0$ implies $W_1(z)=0$,
contradicting the assumption \cref{itm:W01:share}. Thus $W_0(z)\ne0$.

Let $z\in\spadesuit_-$. 
From the definition of $\alpha_-(z)$ and the expression of $g(z)$, 
we see that $g(z)=0$.
If $W_0(z)=0$, then $g(z)=0$ implies $W_1(z)=0$ 
and $z$ is a common zero of the polynomials $W_0$ and $W_1$, 
contradicting the premise. 
So $W_0(z)\ne0$.
By definition of $\spadesuit_-$, we have $\Delta(z)\ne0$ and $\alpha_-(z)=0$.
Thus $\sqrt{\Delta(z)}=h(z)/W_0(z)$.
From the definition of $\lambda_\pm(z)$, we have  
\begin{align*}
\abs{\lambda_-(z)}>\abs{\lambda_+(z)}
\iff
\Re\brk2{\overline{A(z)}\sqrt{\Delta(z)}}<0,
\end{align*}
which is equivalent to \cref{ineq:Re}. Thus $z\in S$ and $\spadesuit_-\subseteq S$.
Along the same lines, 
we can show that $\spadesuit_+\subseteq S$ and thus $\spadesuit\subseteq S$.

Conversely,
let $z\in S$. Note that 
\begin{equation}\label{g:d}
g(z)
=\begin{cases}
-\alpha_+(z)\alpha_-(z)\Delta(z),&\text{if $\Delta(z)\ne0$},\\[5pt]
h^2(z)/4,&\text{if $\Delta(z)=0$}.
\end{cases}
\end{equation}
From \cref{ineq:Re}, we know that $h(z)\ne0$.
Thus $\alpha_-(z)\alpha_+(z)=0$ by \cref{g:d}.
If $\alpha_-(z)=0$,
then \cref{ineq:Re} implies $\abs{\lambda_-(z)}>\abs{\lambda_+(z)}$, 
that is, $z\in\spadesuit_-$. 
For the other case $\alpha_+(z)=0$, we obtain $z\in\spadesuit_+$.
\end{proof}

\section{The geometry of root distribution}\label{sec:lim}
We employ the notation
\begin{align}
\Delta_B&=d^2-4ce,\notag\\
\Delta_\Delta&
=16(\Delta_B-a^2B(x_A)),\quad\text{and}\label{dd}\\
x_\Delta^\pm&=
\begin{cases}
\displaystyle
\frac{-ab-2d}{a^2+4c}\pm{\frac{\sqrt{\Delta_\Delta}}{2\abs{a^2+4c}}},
&\text{if $a^2+4c\ne0$,}\\[10pt]
\displaystyle
-\frac{b^2+4e}{2ab+4d},&\text{if $a^2+4c=0$ and $2ab+4d\ne0$.}
\end{cases}\notag
\end{align}
Here is our main result.
\begin{theorem}\label{thm:club}
Let $\{W_n(z)\}_{n\ge0}$ be a general sequence of polynomials 
satisfying \cref{rec2:linear}.
\begin{enumerate}
\item
If $\Delta_\Delta<0$, then
\[
\clubsuit=\begin{cases}
\wideparen{x_\Delta^-x_Ax_\Delta^+},
&\text{if $a^2+4c>0$},\\[5pt]
\wideparen{x_\Delta^-x_Cx_\Delta^+}\cup\mathbb{R},
&\text{if $a^2+4c<0$},
\end{cases}
\]
where $x_\Delta^\pm$ are the two zeros of $\Delta(z)$, and $x_C=x_A-2r$.
\item\label[itm]{itm:thm:club}
If $\Delta_\Delta\ge 0$, then
\begin{align*}
\clubsuit&=\begin{cases}
J_\Delta,&\text{if $B(x_A)<0$},\\[5pt]
J_\Delta\cup C_0,&\text{if $B(x_A)>0$},
\end{cases}\\
C_0&=\begin{cases}
\brk[c]1{z\in\mathbb{C}\colon\abs{z-x_A+r}=\abs{r}},&\text{if $B'(x_A)\ne0$},\\
\brk[c]{z\in\mathbb{C}\colon \Re z=x_A},&\text{if $B'(x_A)=0$},
\end{cases}
\end{align*}
where $J_\Delta=\{x\in\mathbb{R}\colon \Delta(x)\le0\}$ and $r=B(x_A)/B'(x_A)$.
\end{enumerate}
\end{theorem}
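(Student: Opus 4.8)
The plan is to reduce \cref{thm:club} to a one--variable problem through a M\"obius change of variable. By \cref{lem:BKW}, $\clubsuit=\{z\colon\abs{\lambda_+(z)}=\abs{\lambda_-(z)}\}$, and, exactly as in the proof of \cref{thm:spade},
\[
\abs{\lambda_+(z)}^2-\abs{\lambda_-(z)}^2=\Re\bigl(\overline{A(z)}\sqrt{\Delta(z)}\bigr),
\]
so $\clubsuit=\{z\colon\Re(\overline{A(z)}\sqrt{\Delta(z)})=0\}$. Since $A(z)=a(z-x_A)$ has real coefficients, for $z\ne x_A$ this condition says precisely that $\Delta(z)/A(z)^2$ is a non--positive real number; and $z=x_A$ always lies in $\clubsuit$ because $\Delta(x_A)=4B(x_A)\ne0$ by \cref{itm:AB:share}, so that $\lambda_\pm(x_A)=\pm\sqrt{B(x_A)}$. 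Hence
\[
\clubsuit=\{x_A\}\cup\bigl\{z\in\mathbb{C}\setminus\{x_A\}\colon \Delta(z)/A(z)^2\in(-\infty,0]\bigr\}.
\]

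First I would substitute $v=1/(z-x_A)$. Writing $B(z)=B(x_A)+B'(x_A)(z-x_A)+c(z-x_A)^2$ (exact, as $\deg B\le2$) and dividing by $A(z)^2=a^2(z-x_A)^2$ yields the identity
\[
\frac{\Delta(z)}{A(z)^2}=1+\frac{4B(z)}{A(z)^2}=\frac4{a^2}\,q(v),\qquad q(v):=B(x_A)v^2+B'(x_A)v+\tfrac{a^2+4c}{4}.
\]
So, up to the positive factor $4/a^2$, membership of $z\ne x_A$ in $\clubsuit$ is governed by whether $q(v)$ is a non--positive real, where $q$ is a quadratic with real coefficients and nonzero leading coefficient $\beta:=B(x_A)$. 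Completing the square, $q(v)=\beta(v-v_0)^2+\gamma$ with $v_0=-B'(x_A)/(2\beta)\in\mathbb{R}$, and the direct identity $B'(x_A)^2-4cB(x_A)=\Delta_B$ gives $\gamma=q(v_0)=-\Delta_\Delta/(64\beta)$ as well as $\Delta_\Delta=16B'(x_A)^2-16(a^2+4c)\beta$. Now $q(v)\in\mathbb{R}$ forces $(v-v_0)^2\in\mathbb{R}$, i.e.\ $v\in L_1:=\mathbb{R}$ or $v\in L_2:=v_0+i\mathbb{R}$; on each $L_j$ the quantity $q(v)$ is real and the condition $q(v)\le0$ is a one--variable quadratic inequality, so $E:=\{v\colon q(v)\in(-\infty,0]\}$ is a union of at most two sub-intervals of $L_1$ and $L_2$ whose endpoints are the zeros of $q$, namely the points $1/(x_\Delta^\pm-x_A)$. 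Solving the four inequalities and sorting by the signs of $\beta$ and $\Delta_\Delta$, one finds: $E$ is (a) a segment of $L_1$ together with all of $L_2$, if $\beta>0$, $\Delta_\Delta\ge0$; (b) $L_1$ with a central open segment deleted (and $L_2$ contributing at most the single point $v_0$), if $\beta<0$, $\Delta_\Delta\ge0$; (c) all of $L_1$ together with a central segment of $L_2$, if $\beta<0$, $\Delta_\Delta<0$; (d) $L_2$ with a central open segment deleted (and $L_1$ contributing nothing), if $\beta>0$, $\Delta_\Delta<0$.

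Then I would transport $E$ back by the M\"obius map $\phi(v)=x_A+1/v$, which has $\phi(\infty)=x_A$, maps $L_1=\mathbb{R}$ onto $\widehat{\mathbb{R}}$, and maps $L_2=v_0+i\mathbb{R}$ onto $C_0$: indeed $\phi$ carries the vertical line $\{\Re v=v_0\}$ to the circle with centre $x_A+\tfrac1{2v_0}=x_A-r$ and radius $\tfrac1{\abs{2v_0}}=\abs r$ (which passes through $x_A$ and through $x_C=x_A-2r$) when $B'(x_A)\ne0$ and $r=B(x_A)/B'(x_A)$, and to the line $\Re z=x_A$ when $v_0=0$, i.e.\ $B'(x_A)=0$; this is precisely $C_0$. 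Since $\clubsuit=\{x_A\}\cup\phi(E\setminus\{0\})$ and $x_A\in C_0$, the four cases become, respectively, $J_\Delta\cup C_0$, then $J_\Delta$, then $\mathbb{R}\cup\wideparen{x_\Delta^-x_Cx_\Delta^+}$, then $\wideparen{x_\Delta^-x_Ax_\Delta^+}$. Two facts then align these with the statement. When $\Delta_\Delta<0$, the quadratic $\Delta$ has no real zero, and $a^2+4c\ne0$ (since $a^2+4c=0$ forces $\Delta_\Delta=16B'(x_A)^2\ge0$), so $\Delta$ keeps the sign of $a^2+4c$ on $\mathbb{R}$; as $\Delta(x_A)=4\beta$, the sign of $\beta$ equals that of $a^2+4c$, which turns cases (c) and (d) into the two cases of part~(1). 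When $\Delta_\Delta\ge0$, the $\phi$--image of the $L_1$--sub-interval is a set of real numbers with endpoints $x_\Delta^\pm$, and, using $\Delta_\Delta=16B'(x_A)^2-16(a^2+4c)\beta$, one checks that $v=0$ (that is, $z=\infty$) lies in that sub-interval exactly when $(a^2+4c)\beta\le0$ --- precisely the case where $J_\Delta$ is unbounded --- so its $\phi$--image is $J_\Delta$ in every case (bounded interval, complement of an interval, a ray, or $\mathbb{R}$), turning cases (a) and (b) into part~(2); the boundary case $\Delta_\Delta=0$ and the special cases $B'(x_A)=0$ or $a^2+4c=0$ are handled by the same argument with the evident conventions ($C_0$ a line, $J_\Delta$ a ray or all of $\mathbb{R}$, $x_C$ the point at infinity). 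I expect the main obstacle to be exactly this last bookkeeping: matching the clean $\beta$/$\Delta_\Delta$ trichotomy for $E$ to the statement's $(a^2+4c)$/$B(x_A)$ dichotomy rests on the two algebraic identities for $\gamma$ and $\Delta_\Delta$ together with a case--by--case determination of whether the point $v=0$ falls inside or outside the relevant sub-interval of $L_1$, which is what dictates whether the real part of $\clubsuit$ is a bounded interval, two rays, a single ray, or all of $\mathbb{R}$, and hence whether it coincides with $J_\Delta$.
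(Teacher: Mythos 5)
Your proposal is correct in substance but follows a genuinely different route from the paper. The paper starts from the same characterization $\clubsuit=\{z\colon\lvert\lambda_+(z)\rvert=\lvert\lambda_-(z)\rvert\}$ and rewrites it (away from the finitely many zeros of $A\Delta$) as ``$4B(z)/A(z)^2$ is real and $<-1$''; it then works in Cartesian coordinates, extracting the curve $C_0$ from $\Im f(z)=0$, the region $S_0$ from the sign condition via \cref{lem:ratio}, describing $S_0$ as a strip, a half-plane or an exterior (\cref{prop:S0}), and finally determining $\Cl(C_0\cap S_0)$ case by case (\cref{lem:CS}), with $J_\Delta$ supplied by the real axis and a closure argument to recover the zeros of $A\Delta$. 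You instead invert at $x_A$: with $v=1/(z-x_A)$ the exact condition $\Delta(z)/A(z)^2\in(-\infty,0]$ becomes $q(v)\le 0$ for the real quadratic $q(v)=B(x_A)v^2+B'(x_A)v+\tfrac{a^2+4c}{4}$ (leading coefficient nonzero by \cref{itm:AB:share}), whose non-positive-real locus is read off from two one-variable quadratic inequalities on the perpendicular lines $\mathbb{R}$ and $v_0+i\mathbb{R}$; transporting back by $z=x_A+1/v$ sends these lines to $\widehat{\mathbb{R}}$ and to $C_0$ (centre $x_A-r$, radius $\lvert r\rvert$, degenerating to the line $\Re z=x_A$ when $B'(x_A)=0$). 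Your key identities $\Delta_\Delta=16\bigl(B'(x_A)^2-(a^2+4c)B(x_A)\bigr)$ and $q(v_0)=-\Delta_\Delta/\bigl(64B(x_A)\bigr)$ check out, and the four sign cases of $\bigl(B(x_A),\Delta_\Delta\bigr)$ match the theorem's cases exactly. What your route buys is an exact (no-closure) description of $\clubsuit$ and a transparent explanation of why only circles and lines through $x_A$, together with subsets of $\mathbb{R}$, can occur; the paper's route is more computational but produces the region $S_0$ explicitly, which it uses again in \cref{lem:CS}. One slip to correct: your criterion that $v=0$ lies in the $L_1$-sub-interval ``exactly when $(a^2+4c)B(x_A)\le0$'' fails when $B(x_A)<0$; membership of $v=0$ in $\{v\in\mathbb{R}\colon q(v)\le0\}$ is simply $q(0)=\tfrac{a^2+4c}{4}\le0$, independently of the sign of $B(x_A)$, and this is indeed precisely when $J_\Delta$ is unbounded. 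The conclusion you draw is unaffected, since for real $x\ne x_A$ one has $q\bigl(1/(x-x_A)\bigr)\le0\iff\Delta(x)\le0$, so the $\phi$-image of $E\cap\mathbb{R}\setminus\{0\}$ is $J_\Delta\setminus\{x_A\}$ directly, with $x_A$ adjoined exactly when $B(x_A)<0$.
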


In order to prove \cref{thm:club}, we will need \cref{lem:club,lem:CS}.
\Cref{lem:ratio} is for establishing \cref{lem:club}.
\Cref{prop:S0} is to show \cref{lem:CS}.

\begin{lemma}\label{lem:ratio}
Let $u,v\in\mathbb{C}$. Then we have the following.
\begin{enumerate}
\item
If $\Re(v)\ne0$, then $u/v\in\mathbb{R}$ if and only if $u/v=\Re(u)/\Re(v)$.
\item
If $\Im(v)\ne0$, then $u/v\in\mathbb{R}$ if and only if $u/v=\Im(u)/\Im(v)$.
\end{enumerate}
\end{lemma}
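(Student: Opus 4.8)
The plan is to reduce both parts to the definition of a real quotient and to treat them symmetrically. Write $u=u_1+iu_2$ and $v=v_1+iv_2$ with $u_1,u_2,v_1,v_2\in\mathbb{R}$; under either hypothesis $\Re(v)\ne0$ or $\Im(v)\ne0$ we have $v\ne0$, so the quotient $u/v$ is well defined throughout.

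The backward implication of either part is immediate: $\Re(u)/\Re(v)$ (resp.\ $\Im(u)/\Im(v)$) is a quotient of two real numbers whose denominator is nonzero by hypothesis, hence a real number, so if $u/v$ equals it then $u/v\in\mathbb{R}$.

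For the forward implication of the first part, suppose $u/v=t$ with $t\in\mathbb{R}$. Then $u=tv$, and comparing real and imaginary parts gives $u_1=tv_1$ and $u_2=tv_2$. Since $v_1\ne0$, the first of these yields $t=u_1/v_1=\Re(u)/\Re(v)$, which is the claimed identity. The second part is the same argument run with the equation $u_2=tv_2$ and the hypothesis $v_2\ne0$, giving $t=\Im(u)/\Im(v)$.

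This statement is elementary and I expect no real obstacle; the only point requiring care is that one must not divide through by a component of $v$ that could vanish, which is exactly why the two cases are separated under the respective hypotheses $\Re(v)\ne0$ and $\Im(v)\ne0$. If a more uniform presentation were preferred, one could instead write $u/v=u\overline{v}/\abs{v}^2$ and note that $u/v\in\mathbb{R}$ iff $\Im(u\overline{v})=u_2v_1-u_1v_2=0$; solving this for $u_2$ (when $v_1\ne0$) or for $u_1$ (when $v_2\ne0$) and substituting back recovers the two formulas. I would keep whichever version reads more smoothly in context, since this lemma is only a bookkeeping tool for separating real from non-real solutions in the later arguments.
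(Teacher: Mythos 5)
Your proof is correct and complete. For comparison, the paper does not actually carry out an argument here: it simply declares the lemma elementary, suggests it can be seen via the exponential form $Re^{i\theta}$ of complex numbers, and omits all details. Your route is the Cartesian one instead: writing $u=u_1+iu_2$, $v=v_1+iv_2$, observing the backward direction is trivial, and for the forward direction setting $u/v=t\in\mathbb{R}$, so $u=tv$ and comparison of real (resp.\ imaginary) parts gives $t=u_1/v_1$ (resp.\ $t=u_2/v_2$) under the stated nonvanishing hypothesis. This is arguably cleaner than the polar-form hint, since the statement is itself phrased in terms of real and imaginary parts, and it is exactly in the form needed later (in the proof of Lemma~\ref{lem:club}) where $f(z)$ is computed as a ratio of imaginary or real parts. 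Your alternative remark via $\Im(u\overline{v})=u_2v_1-u_1v_2=0$ is also fine; either version would serve, and your write-up supplies the details the paper chose to omit.
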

\begin{proof}
It is elementary if one considers the exponential 
form $Re^{i\theta}$ of complex numbers. 
We omit the proof details.
\end{proof}

We use the notation $\Cl(S)$ for a set $S\subseteq \mathbb{C}$ 
to denote the closure of $S$ in the complex plane $\mathbb{C}$ 
with respect to the standard Euclidean topology.

\begin{lemma}\label{lem:club}
For any general sequence $\{W_n(z)\}$ satisfying \cref{rec2:linear}, 
\begin{align*}
\clubsuit&=J_\Delta\cup \Cl(S_0\cap C_0),
\end{align*}
where 
\begin{align*}
S_0&=
\begin{cases}
\displaystyle
\brk[c]1{x+yi\colon aA(x)\brk1{aA(x)+2B'(x)}<0,\,x,y\in\mathbb{R}},
&\text{if $B'(x_A)\ne0$},\\
\displaystyle\brk[c]1{x+yi\colon c^2y^2(a^2+4c)+c\Delta_B<0,\,x,y\in\mathbb{R}},
&\text{if $B'(x_A)=0$}.
\end{cases}
\end{align*}
\end{lemma}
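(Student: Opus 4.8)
The plan is to push the Beraha--Kahane--Weiss description of $\clubsuit$ from \cref{lem:BKW} — which applies because \cref{itm:Ni,itm:AB:share} force its two non-degeneracy conditions — through a chain of equivalences. From $\lambda_++\lambda_-=A(z)$ and $\lambda_+-\lambda_-=\sqrt{\Delta(z)}$ one gets $\abs{\lambda_+}^2-\abs{\lambda_-}^2=\Re\brk1{\overline{A(z)}\sqrt{\Delta(z)}}$ and, after a short manipulation, $\brk1{\abs{A(z)}^2+\abs{\Delta(z)}}^2-16\abs{B(z)}^2=4\Re\brk1{\overline{A(z)}\sqrt{\Delta(z)}}^2$; the latter exhibits $\clubsuit$ as the zero locus of a continuous function, hence as a closed set. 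The equation $\Re\brk1{\overline{A}\sqrt{\Delta}}=0$ holds automatically at the unique zero $x_A$ of $A$, so $x_A\in\clubsuit$; away from $x_A$ we may divide by $A(z)$, turning it into $\sqrt{\Delta}/A\in i\mathbb{R}$, hence into $\Delta(z)/A(z)^2=1+4B(z)/A(z)^2\in(-\infty,0]$; equivalently, $B(z)/A(z)^2$ is real and at most $-1/4$. Thus
\[
\clubsuit=\{x_A\}\cup\{\,z\colon A(z)\ne0,\ B(z)/A(z)^2\le-\tfrac14\,\}.
\]

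Next I would split by whether $z$ is real. For real $z\ne x_A$ the condition $B(z)/A(z)^2\le-\tfrac14$ is exactly $\Delta(z)\le0$, so the real part of $\clubsuit$ is $J_\Delta$, and $x_A$ is absorbed into it when $B(x_A)\le0$ since $\Delta(x_A)=4B(x_A)$. For non-real $z$, write $z=x_A+u$; then $A(z)^2=a^2u^2$ and, expanding $B$ at $x_A$ (using $B''\equiv2c$),
\[
\frac{B(z)}{A(z)^2}=\frac1{a^2}\brk3{c+\frac{B'(x_A)}{u}+\frac{B(x_A)}{u^2}}.
\]
Taking imaginary parts (or invoking \cref{lem:ratio}) shows that, for $\Im u\ne0$, this is real precisely when $\abs{u}^2=-2r\Re(u)$ if $B'(x_A)\ne0$ and when $\Re(u)=0$ if $B'(x_A)=0$; in either case that locus is exactly $C_0\setminus\mathbb{R}$. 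So it remains to pin down, on $C_0\setminus\mathbb{R}$, where $B(z)/A(z)^2<-\tfrac14$.

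For that, when $B'(x_A)\ne0$ the circle $C_0$ is tangent to $\{\Re z=x_A\}$ at $x_A$, so every non-real $z\in C_0$ has $\Im\brk1{A(z)^2}=2a^2(\Re z-x_A)\Im z\ne0$; by \cref{lem:ratio} the real number $B(z)/A(z)^2$ then equals $\Im B(z)/\Im\brk1{A(z)^2}=B'(\Re z)/\brk1{2a^2(\Re z-x_A)}$, and clearing the denominator (whose sign is that of $\Re z-x_A$) turns $B(z)/A(z)^2<-\tfrac14$ into $aA(\Re z)\brk1{aA(\Re z)+2B'(\Re z)}<0$, i.e.\ $z\in S_0$. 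When $B'(x_A)=0$ we have $C_0=\{\Re z=x_A\}$, so on $C_0\setminus\mathbb{R}$, $B(z)/A(z)^2=c/a^2-B(x_A)/\brk1{a^2(\Im z)^2}$, and using $\Delta_B=-4cB(x_A)$ (valid since $d=-2cx_A$ here), $B(z)/A(z)^2<-\tfrac14$ becomes $c^2(\Im z)^2(a^2+4c)+c\Delta_B<0$, again $z\in S_0$. Writing $T=\{\,z\colon\Im z\ne0,\ B(z)/A(z)^2<-\tfrac14\,\}$, this gives $S_0\cap C_0\setminus\mathbb{R}=T$, and by the previous steps $\clubsuit=J_\Delta\cup\{x_A\}\cup T\cup\{\text{non-real zeros of }\Delta\}$.

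Finally I would pass to the closure. Since $\clubsuit$ is closed and $T\subseteq\clubsuit$, we have $J_\Delta\cup\Cl(T)\subseteq\clubsuit$. Conversely, a non-real zero $z_0$ of $\Delta$ is simple and lies on $C_0$, and $B(z)/A(z)^2+\tfrac14=\Delta(z)/\brk1{4A(z)^2}$ changes sign across $z_0$ along $C_0$, so $z_0\in\Cl(T)$; and if $B(x_A)>0$ then $B(z)/A(z)^2\to-\infty$ as $z\to x_A$ along $C_0$ (because $u^2\to0$ through negative reals), so $x_A\in\Cl(T)$, while if $B(x_A)\le0$ then $x_A\in J_\Delta$. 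Hence $\clubsuit=J_\Delta\cup\Cl(T)$. To replace $\Cl(T)$ by $\Cl(S_0\cap C_0)$, observe that $S_0\cap C_0$ differs from $T$ only in real points of $C_0$, namely $x_A$ (never in $S_0$) and, when $B'(x_A)\ne0$, the point $x_A-2r$, for which a direct computation gives $x_A-2r\in S_0\iff\Delta(x_A-2r)<0$; thus $S_0\cap C_0\cap\mathbb{R}\subseteq J_\Delta$ and $J_\Delta\cup\Cl(S_0\cap C_0)=J_\Delta\cup\Cl(T)=\clubsuit$. The main obstacle is the third paragraph — matching the $S_0$-inequality with $B/A^2<-\tfrac14$ along $C_0$ is a delicate though elementary algebraic identity that has to be run separately in the degenerate case $B'(x_A)=0$, where $r$ is undefined and $C_0$ flattens to a line; a secondary nuisance is the book-keeping of the boundary points $x_A$, $x_A-2r$ and the zeros of $\Delta$ when taking closures.
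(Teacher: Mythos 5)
Your proposal is correct and follows essentially the paper's route: both arguments reduce, via the Beraha--Kahane--Weiss description of $\clubsuit$, to the condition that $B(z)/A(z)^2$ is real and at most $-\tfrac14$, split into real and non-real $z$, identify the reality locus with $C_0$ and the inequality with $S_0$ (via \cref{lem:ratio}, or your equivalent expansion at $x_A$), and then pass to closures --- the only structural difference is that the paper dispatches the finitely many exceptional points (the zeros of $A\Delta$) by observing that $\clubsuit$ is closed with no isolated points, whereas you verify directly that they lie in $\Cl(T)$ or $J_\Delta$, a slightly more self-contained variant of the same step. One small slip in your last paragraph: when $B'(x_A)=0$ and $B(x_A)>0$ (equivalently $c\Delta_B<0$) the defining inequality of $S_0$ does not involve $x$ and holds at $y=0$, so $x_A$ then does belong to $S_0\cap C_0\cap\mathbb{R}$ while not belonging to $J_\Delta$; this is harmless, since in exactly that case you have already shown $x_A\in\Cl(T)$, so the identity $J_\Delta\cup\Cl(S_0\cap C_0)=J_\Delta\cup\Cl(T)=\clubsuit$ still stands.
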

\begin{proof}
First of all, we claim that
$\clubsuit=\{z\in\mathbb{C}\colon A(z)\Delta(z)=0\text{ or }f(z)<-1\}$,
where $f(z)=4B(z)/A^2(z)$
for $z\in\mathbb{C}$ such that $A(z)\ne0$.
It is clear that any zero of~$A(z)$ or~$\Delta(z)$ belongs to the set $\clubsuit$.
Assume that $z\in\clubsuit$ and $A(z)\Delta(z)\ne0$.
Since $\Delta(z)=A^2(z)+4B(z)$,
we can infer the following equivalences:
\begin{align*}
&\abs{\lambda_+(z)}=\abs{\lambda_-(z)}\\
\iff&
\text{the vectors $A(z)$ and $\sqrt{\Delta(z)}$ are orthogonal}\\
\iff
&\text{the vectors $A^2(z)$ and $\Delta(z)$ have opposite directions}\\
\iff
&\text{$A^2(z)$ and $B(z)$ have opposite directions, 
and $\abs{A^2(z)}<\abs{4B(z)}$,}\\
\iff
&\text{the number $B(z)/A^2(z)$ is real and less than $-1/4$.}
\end{align*}
This proves the claim. 

Note that no point in the closed set $\clubsuit$ is isolated.
Since the zeros of the function $A(z)\Delta(z)$ are finite,
we deduce that $\clubsuit$ is the closure of the set 
\[
\clubsuit'=\{z\in\mathbb{C}\colon A(z)\Delta(z)\ne0,\,f(z)<-1\}.
\]
Let $z=x+yi\in\clubsuit'$, where $x,y\in\mathbb{R}$.
If $y=0$, then $A(z),\,B(z)\in\mathbb{R}$, 
and the inequality $f(z)<-1$ reduces to $\Delta(x)<0$. 
Therefore,
$\clubsuit'\cap\mathbb{R}=\{x\in\mathbb{R}\colon\Delta(x)<0\}$.

Below we suppose that $y\ne0$.
It is routine to compute that 
\begin{align}
A(z)^2&=A(x)^2-a^2y^2+2aA(x)yi,\label{A2z}\\
B(z)&=B(x)-cy^2+B'(x)yi,\label{Bz}\\
\Im f(z)
&=\frac{-4y\brk1{a^2B'(x_A)(x^2+y^2)+2(a^2e-b^2c)x+b(2ae-bd)}}{(A(x)^2+a^2y^2)^2}.
\label{Im:f}
\end{align}
Since $f(z)\in\mathbb{R}$, 
we will simplify the condition $\Im f(z)=0$ 
according to whether the coefficients of $x$ in the numerator of
the right hand side of \cref{Im:f} vanish.

Suppose that $B'(x_A)\ne0$. In this case, \cref{Im:f} can be recast as
\[
\Im f(z)=\frac{-4a^2B'(x_A)y}{(A(x)^2+a^2y^2)^2}\brk1{(x-x_A+r)^2+y^2-r^2}.
\]
It follows that $\Im f(z)=0$ if and only if $z\in C_0$.

We claim that $\Im\brk1{A(z)^2}\ne0$. 
In fact, if $\Im\brk1{A(z)^2}=0$, then $A(x)=0$ by \cref{A2z}, i.e., $x=x_A$. 
On the other hand, 
since $f(z)\in\mathbb{R}$, we find $4B(z)=A(z)^2f(z)\in\mathbb{R}$. 
By \cref{Bz}, we find $0=B'(x)=B'(x_A)$, a contradiction.
This proves the claim.
Now, by \cref{lem:ratio}, we can deduce that
\[
f(z)=\frac{4B(z)}{A(z)^2}
=\frac{4\Im B(z)}{\Im (A(z)^2)}
=\frac{4B'(x)y}{2aA(x)y}
=\frac{2B'(x)}{aA(x)}.
\]
By \cref{lem:ratio}, the inequality $f(z)<-1$ is equivalent to $z\in S_0$. 
In conclusion, 
\[
\clubsuit=\Cl(\clubsuit')
=\Cl\brk1{\{x\in\mathbb{R}\colon\Delta(x)<0\}\cup(C_0\cap S_0)}
=J_\Delta\cup\Cl(C_0\cap S_0).
\]

Now we suppose that $B'(x_A)=0$. If $a^2e-b^2c=0$, then 
\[
B(x_A)=\frac{b^2c-abd+a^2e}{a^2}
=\frac{2a^2e-abd}{a^2}=x_AB'(x_A)=0,
\]
contradicting the premise that $A(z)$ and $B(z)$ do not share 
a zero. Therefore $a^2e\ne b^2c$. By \cref{Im:f},
the condition $\Im f(z)=0$ implies
\[
x=-\frac{b(2ae-bd)}{2(a^2e-b^2c)}=x_A
\]
and vice versa. Thus $C_0$ is the line $\Re z=x_A$.
It follows that 
\[
\Re \brk1{A(z)^2}=A(x)^2-a^2y^2=-a^2y^2<0.
\]
Applying \cref{lem:ratio}, we obtain
\[
f(z)=\frac{4B(z)}{A(z)^2}
=\frac{4\Re B(z)}{\Re \brk1{A(z)^2}}
=\frac{4(B(x_A)-cy^2)}{-a^2y^2}
=\frac{4c^2y^2+\Delta_B}{a^2cy^2}.
\]
Therefore, by \cref{lem:ratio}, the inequality $f(z)<-1$ is equivalent to 
\[
z\in S_0'=\{x_A+yi\colon cy^2\brk1{cy^2(a^2+4c)+\Delta_B}<0,\,y\in\mathbb{R}\}.
\]
In conclusion
$
\clubsuit=\Cl(\clubsuit')
=\Cl\brk1{\{x\in\mathbb{R}\colon\Delta(x)<0\}\cup S_0'}
=J_\Delta\cup\Cl(C_0\cap S_0)$.
\end{proof}

We should mention that Tran \cite{Tra14} showed that all roots of the polynomials $W_n(z)$ 
defined by \cref{rec2} with initial polynomials $W_0(z)=1$ and $W_1(z)=B(z)$
densely lie in the set $\clubsuit$. In our framework, the polynomial $W_1(z)$ is not restricted 
to related with the coefficient polynomial $B(z)$ in \cref{rec2}.

In \cref{prop:S0}, we describe the geometric shape of the open set $S_0$, which will be used in the proof of \cref{lem:CS}. For convenience, we denote by $x_\Delta$ and $y_\Delta$ the real and imaginary part of the zeros of $\Delta(z)$, respectively, i.e., $x_\Delta^\pm=x_\Delta\pm i y_\Delta$, where
\begin{enumerate}
\item
if $a^2+4c\ne0$, then
\[
x_\Delta=-\frac{ab+2d}{a^2+4c}
\quad\text{and}\quad
y_\Delta=\frac{\sqrt{-\Delta_\Delta}}{2\abs{a^2+4c}};
\]
\item
if $a^2+4c=0$ and $2ab+4d\ne0$, then 
\[
x_\Delta=-\frac{b^2+4e}{2ab+4d}
\quad\text{and}\quad
y_\Delta=0.
\]
\end{enumerate}

\begin{proposition}\label{prop:S0}
If $B'(x_A)\ne0$, then the set $S_0$ has one of the following four shapes:
\begin{enumerate}
\itemsep=5pt
\item
if $a^2+4c>0$, 
then $S_0$ is a vertical strip with boundaries $\Re z=x_A$ and $\Re z=x_\Delta$;
\item
if $a^2+4c<0$,
then $S_0$ is the exterior of a vertical strip with boundaries $\Re z=x_A$ and $\Re z=x_\Delta$;
\item
if $a^2+4c=0$ and $B'(x_A)>0$, then
$S_0$ is the left half-plane with the boundary $\Re z=x_A$; 
\item
if $a^2+4c=0$ and $B'(x_A)<0$, then
$S_0$ is the right half-plane with the boundary $\Re z=x_A$.
\end{enumerate}
If $B'(x_A)=0$, then the set $S_0$ has one of the following four shapes:
\begin{enumerate}
\itemsep=5pt
\item
if $a^2+4c>0$ and $c\Delta_B<0$, 
then $S_0$ is the horizontal strip with boundaries $y=\pm y_\Delta$;
\item
if $a^2+4c<0$ and $c\Delta_B>0$,
then $S_0$ is the exterior of the horizontal strip with boundaries $y=\pm y_\Delta$;
\item
if both $a^2+4c$ and $c\Delta_B$ are nonnegative, then $S_0=\emptyset$;
\item
if the set $\{a^2+4c,\,c\Delta_B\}$ consists of a negative
and a nonpositive number,
then $S_0=\mathbb{C}\setminus\mathbb{R}$.
\end{enumerate}
\end{proposition}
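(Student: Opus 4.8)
The plan is to substitute the explicit quadratics $A(z)=az+b$ and $B(z)=cz^2+dz+e$ into the two defining inequalities of $S_0$ and, in each regime, reduce the description of $S_0$ to the sign of a single real polynomial, after which the claimed shape can be read off directly.

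Consider first the case $B'(x_A)\ne0$, where the defining inequality constrains only $x=\Re z$, so $S_0$ is a union of vertical lines. I would start by recording the factorizations $aA(x)=a^2(x-x_A)$ and $aA(x)+2B'(x)=(a^2+4c)x+(ab+2d)$. When $a^2+4c\ne0$ the latter equals $(a^2+4c)(x-x_\Delta)$ with $x_\Delta=-(ab+2d)/(a^2+4c)$, so the defining inequality becomes $(a^2+4c)(x-x_A)(x-x_\Delta)<0$; moreover $x_A=x_\Delta$ amounts to $2bc=ad$, i.e. to $B'(x_A)=0$, so under our hypothesis the two points are distinct, and the solution set is the open interval between them if $a^2+4c>0$ (the vertical strip of (1)) and its complement if $a^2+4c<0$ (giving (2)). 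When $a^2+4c=0$ we have $c=-a^2/4$ and $aA(x)+2B'(x)=ab+2d=2B'(x_A)$ is a nonzero constant, so the inequality reduces to $(x-x_A)B'(x_A)<0$, yielding the half-plane $\Re z<x_A$ if $B'(x_A)>0$ and $\Re z>x_A$ if $B'(x_A)<0$, i.e. (3) and (4).

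For the case $B'(x_A)=0$ the defining inequality constrains only $y=\Im z$, so $S_0$ is a union of horizontal lines symmetric about $\mathbb{R}$. The identities I would use, all consequences of $B'(x_A)=0$ (equivalently $d=2bc/a$), are $\Delta_B=-4cB(x_A)$ and $\Delta_\Delta=-16(a^2+4c)B(x_A)$; in particular the sign of $c\Delta_B$ is opposite to that of $B(x_A)$, and whenever $\Delta_\Delta<0$ one has $y_\Delta^2=4B(x_A)/(a^2+4c)$. Substituting $c\Delta_B=-4c^2B(x_A)$ and cancelling the positive factor $c^2$ turns the defining inequality into $y^2(a^2+4c)<4B(x_A)$, and the four assertions then follow by splitting on the sign of $a^2+4c$: a horizontal strip $|y|<y_\Delta$ when $a^2+4c>0$ and $B(x_A)>0$ (where automatically $\Delta_\Delta<0$); the exterior of such a strip when $a^2+4c<0$ and $B(x_A)<0$; the empty set when $a^2+4c\ge0$ and $B(x_A)\le0$; and, when $a^2+4c\le0$ and $B(x_A)>0$, the plane minus the real axis — here one must work with the inequality in the form carrying the factor $cy^2$ that appears in the proof of \cref{lem:club}, which genuinely forbids $y=0$. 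Rewriting $B(x_A)>0$ and $B(x_A)<0$ as $c\Delta_B<0$ and $c\Delta_B>0$ matches the four labelled regimes.

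Apart from the routine substitutions, the points needing care are bookkeeping — checking that the six sign patterns of $(a^2+4c,\,c\Delta_B)$ (with $c\Delta_B\ne0$) exhaust exactly the four regimes in the second part, and that the relevant strips are nondegenerate: for the vertical strips this is the distinctness $x_A\ne x_\Delta$ noted above, and for the horizontal strips it follows from $B(x_A)\ne0$, which holds because $A$ and $B$ share no zero while $A(x_A)=0$. The single genuinely delicate step is the treatment of the real axis in the last regime of the second part, the one place where a naive reading of the inequality defining $S_0$ would produce $\mathbb{C}$ instead of $\mathbb{C}\setminus\mathbb{R}$.
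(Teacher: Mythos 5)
Your proposal is correct and takes the same route the paper intends, since the paper's own proof is just ``Immediate from the definition of $S_0$'': you simply carry out the sign analysis of $aA(x)\bigl(aA(x)+2B'(x)\bigr)$ and of $c^2y^2(a^2+4c)+c\Delta_B$ that the authors leave implicit, using the same identities ($aA(x)=a^2(x-x_A)$, $\Delta_B=-4cB(x_A)$, $\Delta_\Delta=-16(a^2+4c)B(x_A)$ when $B'(x_A)=0$). You also correctly isolate the one genuinely delicate point, namely that in the last regime the displayed formula for $S_0$ in \cref{lem:club} would give $\mathbb{C}$ rather than $\mathbb{C}\setminus\mathbb{R}$, and that the stated conclusion requires the form of the inequality carrying the factor $cy^2$ from the proof of \cref{lem:club} (this is a small inconsistency in the paper itself, harmless because only $\Cl(C_0\cap S_0)$ is used later).
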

\begin{proof}
Immediate from the definition of $S_0$.
\end{proof}
%

Note that $\Delta_\Delta=16(d+ab/2)^2\ge0$ if $a^2+4c=0$.
Now we are in a position to present the shape of the closure of $C_0\cap S_0$.

\begin{lemma}\label{lem:CS}
We have
\begin{align*}
\Cl(C_0\cap S_0)&=\begin{cases}
\wideparen{x_\Delta^-x_Ax_\Delta^+},
&\text{if $\Delta_\Delta<0$ and $a^2+4c>0$};\\[5pt]
\wideparen{x_\Delta^-x_Cx_\Delta^+},
&\text{if $\Delta_\Delta<0$ and $a^2+4c<0$};\\[5pt]
C_0,
&\text{if $\Delta_\Delta\ge0$ and $B(x_A)>0$};\\[5pt]
\emptyset,&\text{if $\Delta_\Delta\ge0$ and $B(x_A)<0$}.
\end{cases}
\end{align*}
where $x_C=x_A-2r$
is the real point on the circle $C_0$ that is not $x_A$. 
In particular, when $\Delta_\Delta<0$ and $B'(x_A)=0$, 
the arc $\wideparen{x_\Delta^-x_Ax_\Delta^+}$ 
degenerates to the line segment with ends~$x_\Delta^\pm$, 
and the arc $\wideparen{x_\Delta^-x_Cx_\Delta^+}$ 
degenerates to the union $\{x_A+yi\colon\abs{y}\ge y_\Delta\}$ of two rays.
\end{lemma}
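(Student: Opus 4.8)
The plan is to combine the algebraic description of $S_0$ from \cref{prop:S0} with the explicit geometry of $C_0$ from \cref{lem:club}, and then take closures. The work splits naturally along the two hypotheses $\Delta_\Delta<0$ and $\Delta_\Delta\ge0$, and within each case along the sign of $a^2+4c$ (equivalently, whether $C_0$ is a circle or a line). Throughout I would use that $x_A$ always lies on $C_0$ — indeed $B'(x_A)\ne0$ gives $|x_A-x_A+r|=|r|$, and $B'(x_A)=0$ gives the line $\Re z=x_A$ — and that the two zeros $x_\Delta^\pm$ of $\Delta(z)$ are exactly the points where $C_0$ meets the boundary of the strip/half-plane defining $S_0$; this is the bridge between the two lemmas. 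The key computational input is that $x_\Delta^\pm$ and $x_A$ are distinct whenever $\Delta_\Delta<0$ (so the arc is nondegenerate as a set), which I would verify by comparing $x_\Delta=-(ab+2d)/(a^2+4c)$ with $x_A=-b/a$: their difference is a nonzero multiple of $a^2B(x_A)$, and $\Delta_\Delta<0$ forces $B(x_A)\ne0$ via $\Delta_\Delta=16(\Delta_B-a^2B(x_A))$ together with $\Delta_B=d^2-4ce$ having the appropriate sign.

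For the case $\Delta_\Delta<0$: here $B'(x_A)\ne0$ forces $C_0$ to be a genuine circle (the $B'(x_A)=0$ subcase gives $\Delta_\Delta=16(d+ab/2)^2\ge0$, excluded; but I record it separately since the statement's "in particular" clause addresses it). When $a^2+4c>0$, \cref{prop:S0}(1) says $S_0$ is the open vertical strip between the lines $\Re z=x_A$ and $\Re z=x_\Delta$. Intersecting the circle $C_0$ with this strip: the circle crosses $\Re z=x_A$ at $x_A$ (a real point) and its conjugate — but $x_A$ is real, so that intersection is the single point $x_A$, which is a boundary point, not interior; the circle crosses $\Re z=x_\Delta$ at $x_\Delta^\pm$. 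Hence $C_0\cap S_0$ is the open arc of $C_0$ strictly between $x_\Delta^-$ and $x_\Delta^+$ that passes through the interior of the strip, and its closure is $\wideparen{x_\Delta^-x_Ax_\Delta^+}$ — I need to check the arc through the strip interior is the one containing $x_A$ on its closure, which follows since $x_A$ is the unique point of $C_0$ on the strip's left/right boundary line and the arc must terminate there. When $a^2+4c<0$, \cref{prop:S0}(2) flips $S_0$ to the exterior of that strip, so $C_0\cap S_0$ is the complementary open arc, whose closure is $\wideparen{x_\Delta^-x_Cx_\Delta^+}$ where $x_C=x_A-2r$ is the second real point on $C_0$ (the antipode of $x_A$ along the real axis); I would verify $x_C$ lies in the strip exterior and on $C_0$ by direct substitution into $|z-x_A+r|=|r|$.

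For the case $\Delta_\Delta\ge0$: now $\Delta(x)\le0$ can fail to have real solutions or the relevant geometry changes, and the claim is that $\Cl(C_0\cap S_0)$ is either all of $C_0$ (when $B(x_A)>0$) or empty (when $B(x_A)<0$). The mechanism is that $\Delta_\Delta\ge0$ makes the defining inequality of $S_0$ either hold on a region containing all of $C_0$ up to a finite set, or fail on all of $C_0$. Concretely, for $B'(x_A)\ne0$: $S_0=\{aA(x)(aA(x)+2B'(x))<0\}$, and on the circle $C_0$ one can parametrize and check — using $r=B(x_A)/B'(x_A)$ and the identity $aA(x)+2B'(x)=a^2(x-x_A)+2B'(x_A)(\,\cdot\,)$ rearranged — that the sign of $aA(x)(aA(x)+2B'(x))$ along $C_0$ is controlled by $\mathrm{sign}(B(x_A))$ once $\Delta_\Delta\ge0$ removes the possibility of a sign change; so either $C_0\subseteq S_0$ up to the finitely many zeros of $A$, giving closure $C_0$, or $C_0\cap S_0=\emptyset$. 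The $B'(x_A)=0$ subcase is handled the same way using $S_0=\{c^2y^2(a^2+4c)+c\Delta_B<0\}$ and the note $\Delta_\Delta=16(d+ab/2)^2$.

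The main obstacle I expect is \emph{bookkeeping the arc orientation}: showing that the open arc of $C_0$ lying in the strip interior is precisely the one whose two endpoints are $x_\Delta^\pm$ and whose closure contains $x_A$ (resp. $x_C$ in the exterior case), rather than accidentally naming the complementary arc. This is where the degenerate clause matters — when $B'(x_A)=0$ and $\Delta_\Delta<0$ the "circle" is the vertical line $\Re z=x_A$, its two "real points" $x_A$ and $x_C$ both run off to $\infty$, and the arcs degenerate respectively to the segment $[x_\Delta^-,x_\Delta^+]$ and to the two rays $\{x_A+yi:|y|\ge y_\Delta\}$; I would treat this by a limiting/continuity argument or by redoing the strip intersection directly with the line, noting $y_\Delta=\sqrt{-\Delta_\Delta}/(2|a^2+4c|)$ is exactly where the line meets $\Re z=x_\Delta$... — wait, when $B'(x_A)=0$ the relevant $S_0$ is the \emph{horizontal} strip $|y|<y_\Delta$ from \cref{prop:S0}, so intersecting the vertical line $\Re z=x_A$ with it gives the open segment $\{x_A+yi:|y|<y_\Delta\}$ whose closure is the stated segment, and intersecting with the exterior gives the two rays; this makes the degenerate case a clean direct check rather than a limit.
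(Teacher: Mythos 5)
Your overall route is the same as the paper's: intersect the explicit description of $C_0$ with the strips and half-planes of \cref{prop:S0}, take closures, and split on the sign of $\Delta_\Delta$ and on whether $B'(x_A)=0$; your handling of $\Delta_\Delta<0$, including the final direct check that the degenerate case gives the segment $\{x_A+yi\colon\abs{y}\le y_\Delta\}$ or the two rays, matches the paper. However, one of your structural claims is false: you assert that $B'(x_A)=0$ forces $\Delta_\Delta=16(d+ab/2)^2\ge0$, so that the degenerate subcase is ``excluded'' from $\Delta_\Delta<0$. That identity holds only when $a^2+4c=0$ (where indeed $B'(x_A)=d+ab/2$); in general $B'(x_A)=0$ gives $B(x_A)=-\Delta_B/(4c)$ and hence $\Delta_\Delta=4\Delta_B(a^2+4c)/c$, which can be negative — e.g.\ $A(z)=2z$, $B(z)=z^2+1$ has $B'(x_A)=0$ and $\Delta_\Delta=-128$. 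You recover because you treat that case directly at the end anyway, but the same misapplied identity reappears in your $\Delta_\Delta\ge0$, $B'(x_A)=0$ discussion, where what is actually needed is the extremum relation $B(x_A)=-\Delta_B/(4c)$, which converts the two surviving cases of \cref{prop:S0} ($S_0=\emptyset$, resp.\ $S_0=\mathbb{C}\setminus\mathbb{R}$) into $B(x_A)<0$, resp.\ $B(x_A)>0$.

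The more serious gap is in the case $\Delta_\Delta\ge0$, $B'(x_A)\ne0$, which is the heart of the lemma: you assert that the sign of $aA(x)\bigl(aA(x)+2B'(x)\bigr)$ along $C_0$ ``is controlled by $\mathrm{sign}(B(x_A))$ once $\Delta_\Delta\ge0$ removes the possibility of a sign change,'' with only a placeholder identity offered. This is precisely what must be computed. The paper establishes it with two identities: $(x_\Delta-x_A+r)^2-r^2=\Delta_\Delta/\bigl(4(a^2+4c)^2\bigr)\ge0$, so the line $\Re z=x_\Delta$ is tangent to or disjoint from $C_0$ and hence $\Cl(C_0\cap S_0)$ is either $C_0$ or $\emptyset$; and $(x_A-x_C)(x_A-x_\Delta)=4B(x_A)/(a^2+4c)$, which says that $x_C$ and $x_\Delta$ lie on the same side of $x_A$ exactly when $B(x_A)>0$, and this decides which alternative occurs both for the strip ($a^2+4c>0$) and for its exterior ($a^2+4c<0$); the subcase $a^2+4c=0$ needs a separate half-plane argument via the sign of $r=B(x_A)/B'(x_A)$. (The first identity also justifies the membership $x_\Delta^\pm\in C_0$ that you use as the ``bridge'' in the $\Delta_\Delta<0$ case.) Finally, your side remark that $x_A-x_\Delta$ is a nonzero multiple of $a^2B(x_A)$ is incorrect: $x_A-x_\Delta=2B'(x_A)/(a^2+4c)$, and it is the product $(x_A-x_C)(x_A-x_\Delta)$ that carries $B(x_A)$; this slip is harmless for $\Delta_\Delta<0$ (there $x_\Delta^\pm$ are non-real, so trivially distinct from $x_A$), but until the computations above are supplied, the dichotomy ``$C_0$ if $B(x_A)>0$, $\emptyset$ if $B(x_A)<0$'' is not established.
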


\begin{proof}
We need to deal with $4$ cases according to the sign of $\Delta_\Delta$ and the sign of $B'(x_A)$.

Suppose that $\Delta_\Delta<0$ and $B'(x_A)\ne0$. Then the set
\begin{equation}\label{eq:C0}
C_0=\brk[c]1{x+yi\colon(x-x_A+r)^2+y^2=r^2,\,x,y\in\mathbb{R}}
\end{equation}
is a circle, $C_0\cap \mathbb{R}=\{x_A,\,x_C\}$,
and $x_\Delta^\pm\in C_0$. As a result, $C_0$ is the unique circle passing through the points $x_A$, $x_\Delta^-$ and $x_\Delta^+$. On the other hand,
by \cref{prop:S0}, we see that 
\[
S_0=\brk[c]1{x+yi\colon aA(x)\brk1{aA(x)+2B'(x)}<0,\,x,y\in\mathbb{R}}
\]
is either the vertical strip with boundaries $\Re z=x_A$ and $\Re z=x_\Delta$ (when $a^2+4c>0$), 
or the exterior of this strip (when $a^2+4c<0$).
This proves the first two cases that the closure
is the arc $\wideparen{x_\Delta^-x_Ax_\Delta^+}$ or
the arc $\wideparen{x_\Delta^-x_Cx_\Delta^+}$ correspondingly.

Suppose that $\Delta_\Delta<0$ and $B'(x_A)=0$. 
From definition, the set $C_0$ is the vertical line $\Re z=x_A$;
by \cref{prop:S0}, the set 
\[
S_0=\brk[c]1{x+yi\colon cy^2\brk1{cy^2(a^2+4c)+\Delta_B}<0,\,x,y\in\mathbb{R}}
\]
is the horizontal strip with boundaries $y=\pm y_\Delta$ (when $a^2+4c>0$), 
or the complement of the horizontal strip (when $a^2+4c<0$).
Therefore, the intersection closure $\Cl(C_0\cap S_0)$
is the closed vertical line segment with ends $(x_A,\pm y_\Delta)$ if $a^2+4c>0$, and it is the union of two rays obtained from the line $\Re z=x_A$ by removing the open line segment with those ends if $a^2+4c<0$.

Suppose that $\Delta_\Delta\ge0$ and $B'(x_A)\ne0$.
If $a^2+4c\ne0$, then we can compute that 
\begin{align}
x_A-x_\Delta
&=\frac{2B'(x_A)}{a^2+4c},\notag\\
(x_A-x_C)(x_A-x_\Delta)
&=\frac{4B(x_A)}{a^2+4c},\label{A-C:A-d}\\
(x_\Delta-x_A+r)^2-r^2
&=\frac{\Delta_\Delta}{4(a^2+4c)^2}\ge0.\notag
\end{align}
In view of \cref{eq:C0}, 
we find that the line $\Re z=x_\Delta$ 
is either tangent to the circle $C_0$, or disjoint from $C_0$.
Now, by checking each of the four cases in \cref{prop:S0} that the set $S_0$
is a strip or the exterior of a strip, we obtain that the closure 
$\Cl(C_0\cap S_0)$ equals either $C_0$ or $\emptyset$. 

If $a^2+4c>0$, then the set $S_0$ is the strip with boundaries $\Re z=x_A$ and $\Re z=x_\Delta$. In this case, the closure is $C_0$ if and only if 
the points $x_C$ and $x_\Delta$ lie on the same side of the line $\Re z=x_A$, which is equivalent to $B(x_A)>0$ by \cref{A-C:A-d}.  If $a^2+4c<0$, then $S_0$ is the exterior of the strip. In this case, the closure is $C_0$ if and only if $x_C$ and $x_\Delta$ lie on different sides of $x_A$, which is also equivalent to $B(x_A)>0$ by \cref{A-C:A-d}. 
Since the polynomials $A(z)$ and $B(z)$ do not share a zero,
$B(x_A)\ne0$. This proves the desired results for the closure
to be $C_0$ or $\emptyset$ as if $a^2+4c\ne 0$.

Otherwise $a^2+4c=0$. In this case, the set $S_0$ is a half plane with the boundary $\Re z=x_A$, which is tangent to the circle $C_0$. It follows that the closure is either $C_0$ or $\emptyset$. We shall determine the closure by considering whether $x_C\in S_0$.
If $B'(x_A)>0$, then~$S_0$ is the left half plane. In this case,
$x_C\in S_0$ if and only if $x_C<x_A$, that is, $r>0$, or equivalently $B(x_A)>0$.
If $B'(x_A)<0$, then $S_0$ is the right half plane. In this case, $x_C\in S_0$ if and only if $x_C>x_A$, that is, $r<0$, or equivalently $B(x_A)>0$ again. To sum up, we find that the closure is $C_0$ if and only if $B(x_A)>0$. It follows that the closure is empty if and only if $B(x_A)<0$.

At last,
suppose that $\Delta_\Delta\ge 0$ and $B'(x_A)=0$. Now, the set $C_0$ is the line $\Re z=x_A$, and the set $S_0$ is either empty (when both $a^2+4c$ and $c\Delta_B$ are nonnegative) or equal to~$\mathbb{C}\backslash\mathbb{R}$ (when $\{a^2+4c,\,c\Delta_B\}$ consists of a  negative number and a nonpositive number). Thus the closure $\Cl(C_0\cap S_0)$ equals $\emptyset$ or $C_0$ correspondingly.
It suffices to show that these two cases correspond to $B(x_A)<0$ and $B(x_A)>0$ respectively. In fact,
when $c\Delta_B\ge 0$, we can infer that $B(x_A)$ as the  extremal value of the quadratic real function $B(x)$ is nonpositive. Moreover, the value $B(x_A)$ has to be negative since $A(z)$ and $B(z)$ do not share a zero. When $c\Delta_B\le 0$ and $a^2+4c\le0$, we have $c<0$ and $\Delta_B\ge 0$,
which implies $B(x_A)>0$ in the same fashion.

This completes the proof.
\end{proof}

Now \cref{thm:club} follows from \cref{lem:club,lem:CS} immediately. 

We do not consider the case that $\Delta(z)=A^2(z)+4B(z)$ is identically zero, since that would imply common zeros of the polynomials $A(z)$ and $B(z)$.
%

\begin{theorem}\label{thm:shape:12}
Let $\{W_n(z)\}_{n\ge0}$ be a general sequence of polynomials satisfying \cref{rec2:linear}.
Then the geometric shape of the set of non-isolated limits of zeros of the polynomials $W_n(z)$ is one of the following:
\begin{enumerate}
\item\label[itm]{itm:arc}
an arc with non-real ends $x_\Delta^-$ and $x_\Delta^+$, whose degenerate form is the vertical line segment with ends $x_\Delta^\pm$; 
\item\label[itm]{itm:arc:complement}
the union of $\mathbb{R}$ and an arc with non-real ends $x_\Delta^-$ and $x_\Delta^+$; the degenerate form of the arc is the union of two disjoint vertical rays with the ends $x_\Delta^\pm$, respectively;
\item\label[itm]{itm:J}
the real line $\mathbb{R}$, or 
a bounded interval of positive length, or a ray, or the union of two disjoint rays on~$\mathbb{R}$; 
\item\label[itm]{itm:J:line}
the union of a vertical line and a member in \cref{itm:J},
with empty intersection;
\item\label[itm]{itm:J:circle}
the union of a circle and a member in \cref{itm:J},
with a singleton intersection.
\item\label[itm]{itm:line:circle}
a circle, whose degenerate form is a vertical line.
\end{enumerate}
\end{theorem}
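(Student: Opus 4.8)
The plan is to deduce \cref{thm:shape:12} as a bookkeeping corollary of \cref{thm:club}, organizing the discussion exactly along the case split in that theorem. First I would observe that by \cref{thm:club} the set $\clubsuit$ always decomposes as a ``$J$-part'' and a ``curve part'': when $\Delta_\Delta<0$ the $J$-part is empty or $\mathbb R$ (according to the sign of $a^2+4c$) and the curve part is one of the two arcs $\wideparen{x_\Delta^-x_Ax_\Delta^+}$ or $\wideparen{x_\Delta^-x_Cx_\Delta^+}$; when $\Delta_\Delta\ge 0$ the $J$-part is $J_\Delta=\{x\in\mathbb R\colon\Delta(x)\le 0\}$ and the curve part is empty (if $B(x_A)<0$) or the circle/line $C_0$ (if $B(x_A)>0$). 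So the proof amounts to checking that each resulting combination falls under one of the six listed shapes, and that all six are genuinely attained as degenerate or non-degenerate possibilities.

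The key steps, in order, are as follows. \textbf{Step 1:} Handle $\Delta_\Delta<0$. Here $x_\Delta^\pm=x_\Delta\pm iy_\Delta$ are non-real (since $y_\Delta>0$ when $a^2+4c\ne0$; and \cref{lem:CS} already records that when $B'(x_A)=0$ the arcs degenerate to a vertical segment or to a union of two vertical rays). If $a^2+4c>0$ the $J$-part is empty and we land in \cref{itm:arc}; if $a^2+4c<0$ the $J$-part is $\mathbb R$ and the arc has non-real ends, so we land in \cref{itm:arc:complement}. One must also note that $a^2+4c=0$ forces $\Delta_\Delta\ge0$ (remarked just before \cref{lem:CS}), so these are the only subcases. \textbf{Step 2:} Handle $\Delta_\Delta\ge 0$ with $B(x_A)<0$. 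Then $\clubsuit=J_\Delta$, a set of the form $\{x\in\mathbb R\colon\Delta(x)\le0\}$ with $\Delta$ a real quadratic (or linear, if $a^2+4c=0$) polynomial; analysing the leading coefficient $a^2+4c$ and the discriminant $\Delta_\Delta$ of $\Delta$ shows $J_\Delta$ is either $\mathbb R$, a bounded closed interval of positive length (the degenerate ``single point'' cannot occur because $\Delta_\Delta\ge0$ and, when $\Delta_\Delta=0$ with the interval a point, that point would be a double real zero of $\Delta$ — one should check this is excluded or else is a legitimate degenerate interval), a closed ray, or a union of two disjoint closed rays — exactly \cref{itm:J}. \textbf{Step 3:} Handle $\Delta_\Delta\ge0$ with $B(x_A)>0$. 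Then $\clubsuit=J_\Delta\cup C_0$ where $C_0$ is a circle (if $B'(x_A)\ne0$) or a vertical line (if $B'(x_A)=0$). The intersection $J_\Delta\cap C_0$ must be identified: using \cref{A-C:A-d} and the tangency/disjointness facts from the proof of \cref{lem:CS} one shows that $C_0$ meets $\mathbb R$ only at $x_A$ and $x_C$, that $x_A\notin J_\Delta$ (since $\Delta(x_A)=A(x_A)^2+4B(x_A)=4B(x_A)>0$), and that the relative position of $x_C$ versus the roots of $\Delta$ determines whether $C_0\cap J_\Delta$ is empty or the singleton $\{x_C\}$. This yields \cref{itm:J:line} (line disjoint from the $J$-part) when $B'(x_A)=0$ — here one checks the line $\Re z=x_A$ indeed misses $J_\Delta$ since $x_A\notin J_\Delta$ — and \cref{itm:J:circle} (circle meeting the $J$-part in one point) when $B'(x_A)\ne0$ and $x_C\in J_\Delta$, while if $x_C\notin J_\Delta$ the circle is disjoint from $J_\Delta$ and, if moreover $J_\Delta=\emptyset$, we get the pure circle of \cref{itm:line:circle} (degenerating to a vertical line when $B'(x_A)=0$ and $J_\Delta=\emptyset$). \textbf{Step 4:} Collect the cases and confirm no combination has been missed and that each of \crefrange{itm:arc}{itm:line:circle} arises.

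I expect the main obstacle to be Step 3: the careful verification of the \emph{intersection} of the circle (or line) $C_0$ with $J_\Delta$ — showing it is always empty or a single point, never a two-point set or an arc — and the companion bookkeeping that $x_A\notin J_\Delta$ while $x_C$ may or may not lie in $J_\Delta$, which is precisely what separates \cref{itm:J:circle} from \cref{itm:line:circle}. This requires translating the sign conditions ``$\Delta_\Delta\ge0$, $B(x_A)>0$'' into statements about where $x_C=x_A-2r$ sits relative to the real zeros of $\Delta$, via the identities \cref{A-C:A-d}; the tangency observation $(x_\Delta-x_A+r)^2-r^2=\Delta_\Delta/(4(a^2+4c)^2)\ge0$ from the proof of \cref{lem:CS} is the crucial input that prevents a transverse (two-point) intersection. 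Everything else — Steps 1 and 2, and enumerating the shapes of $J_\Delta$ — is routine quadratic-polynomial analysis and can be compressed to a short paragraph each.
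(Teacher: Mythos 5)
Your overall route coincides with the paper's: both deduce \cref{thm:shape:12} by case bookkeeping from \cref{thm:club} (with \cref{lem:CS} supplying the degenerate arc forms), splitting on the sign of $\Delta_\Delta$, the sign of $B(x_A)$, and whether $B'(x_A)=0$; your Steps 1--2 and the enumeration of the possible shapes of $J_\Delta$ are exactly the paper's discussion, except that the paper also treats explicitly the extremal case $a^2+4c=ab+2d=0$ (constant $\Delta$), which is where $\clubsuit=\mathbb{R}$ and the vertical-line form of \cref{itm:line:circle} actually come from, and the case where $J_\Delta$ degenerates to a singleton absorbed into $C_0$.

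The genuine gap is in your Step 3. You leave unresolved whether $x_C\in J_\Delta$, and you even entertain the outcome ``$x_C\notin J_\Delta$, circle disjoint from $J_\Delta$'': if that could happen with $J_\Delta\ne\emptyset$, then $\clubsuit$ would be a circle disjoint from a nonempty closed subset of $\mathbb{R}$, a shape that is \emph{not} among the six listed, so saying that ``the relative position of $x_C$ determines whether the intersection is empty or a singleton'' does not prove the theorem --- you must show the empty case never occurs alongside a nonempty $J_\Delta$. The missing verification is a short computation: with $r=B(x_A)/B'(x_A)$ and $x_C=x_A-2r$,
\[
\Delta(x_C)=4r^2(a^2+4c)-4B(x_A)
=\frac{4B(x_A)}{B'(x_A)^2}\Bigl(B(x_A)(a^2+4c)-B'(x_A)^2\Bigr)
=-\frac{B(x_A)\,\Delta_\Delta}{4\,B'(x_A)^2}\le 0,
\]
using the identity $B'(x_A)^2-(a^2+4c)B(x_A)=\Delta_\Delta/16$. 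Hence whenever $\Delta_\Delta\ge0$, $B(x_A)>0$ and $B'(x_A)\ne0$ one has $x_C\in J_\Delta$, and since $C_0\cap\mathbb{R}=\{x_A,x_C\}$ while $\Delta(x_A)=4B(x_A)>0$ keeps $x_A$ out of $J_\Delta$, the intersection $C_0\cap J_\Delta$ is exactly the singleton $\{x_C\}$, giving \cref{itm:J:circle}; this is the fact the paper invokes as ``by the value of $\Delta(x_C)$'' in its corollary. Consequently the pure circle of \cref{itm:line:circle} does not arise from $J_\Delta=\emptyset$ (impossible for a genuine circle, since $J_\Delta=\emptyset$ forces $\Delta$ to be a positive constant and then $B'(x_A)=0$), but from $J_\Delta$ degenerating to $\{x_C\}\subset C_0$ when $\Delta_\Delta=0$, while the vertical-line degenerate form comes from the constant-$\Delta$ case. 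Your flagged worry in Step 2 is resolved the same way: if $B(x_A)<0$ then $\Delta(x_A)=4B(x_A)<0$ forces $\Delta_\Delta>0$, so $J_\Delta$ has nonempty interior and no singleton degeneration occurs in \cref{itm:J}.
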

\begin{proof}
\Cref{itm:arc,itm:arc:complement} correspond to Case $\Delta_\Delta<0$ in \cref{thm:club}. \Cref{itm:J,itm:J:line,itm:J:circle,itm:line:circle} correspond to Case $\Delta_\Delta\ge0$ in \cref{thm:club}.
Note that the set of limits of zeros of all $W_n(z)$ is closed considering the Euclidean topology in $\mathbb{C}$, and that the set $\clubsuit$ is not a singleton since it is non-isolated.
We think it suffices to discuss some extremal cases.

Since $\clubsuit$ contains no isolated points, when $J_\Delta=[x_\Delta^-,\,x_\Delta^+]$, it does not degenerate to a singleton as $x_\Delta^-=x_\Delta^+$ except when it is contained in $C_0$. By the definition, 
$J_\Delta$ degenerates to a singleton only when $x_\Delta^-=x_\Delta^+=x_C$, and in that case $\clubsuit$ is the circle~$C_0$.

When $a^2+4c=ab+2d=0$, the polynomial $\Delta(z)=b^2+4e$ is a constant. In this case, by \cref{dd}, we find 
\[
a^2B(x_A)=\Delta_B=d^2-4ac=a^2(b^2+4e)/4.
\]
If $b^2+4e<0$, then $B(x_A)<0$.
By \cref{thm:club} and the definition of $J_\Delta$, $\clubsuit=J_\Delta=\mathbb{R}$.
If $b^2+4e>0$, then $B(x_A)>0$ and $\clubsuit=C_0$. Moreover,
\[
B'(x_A)=2cx_A+d=2c\brk2{-\frac{b}{a}}-\frac{ab}{2}
=-\frac{b(a^2+4c)}{8a}=0.
\]
Thus $\clubsuit$ is the vertical line $\Re z=x_A$.
This completes the proof.
\end{proof}

A polynomial is said to be \emph{real-rooted}
(resp., \emph{Hurwitz}, \emph{Schur stable}) 
if all its zeros lie in the real axis
(resp., the closed left half-plane, the closed unit disk).
We say that the polynomials in a sequence $\{W_n(z)\}_{n\ge0}$ is \emph{eventually real-rooted} 
(resp., \emph{eventually Hurwitz}, \emph{eventually Schur stable})
if there exists $N>0$ such that $W_n(z)$ is real-rooted 
(resp., Hurwitz, Schur stable) 
for all $n>N$.

\begin{corollary}
Let $\{W_n(z)\}$ be a general sequence of polynomials satisfying \cref{rec2:linear}. Let $\Delta(z)=A^2(z)+4B(z)$ be the discriminant polynomial of the recurrence. Then we have the following.
\begin{enumerate}
\item\label[itm]{itm:realrooted}
If the polynomials $W_n(z)$ are eventually real-rooted, 
then $\Delta(x_A)<0$. Moreover, $\Delta(z)$ is either a negative constant or real-rooted.
\item\label[itm]{itm:Hurwitz}
If the polynomials $W_n(z)$ are eventually Hurwitz, then $\Delta(z)$ has a positive leading coefficient, and the polynomial $A(z)\Delta(z)$ is Hurwitz.
\item\label[itm]{itm:SchurStable}
If the polynomials $W_n(z)$ are eventually Schur stable, 
then $\Delta(z)$ is quadratic with a positive leading coefficient, and
the polynomial $A(z)\Delta(z)$ is Schur stable.
\end{enumerate}
\end{corollary}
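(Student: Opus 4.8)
The plan is to use the elementary fact that, if the polynomials $W_n(z)$ are eventually real-rooted (resp.\ eventually Hurwitz, eventually Schur stable), then every limit of zeros of $\{W_n(z)\}$ lies in $\mathbb{R}$ (resp.\ in the closed left half-plane, in the closed unit disk), because such a limit is by definition a limit of points $z_n$ that lie in the closed region in question for all large $n$. In particular every point of $\clubsuit$ lies in the corresponding region. Now $\clubsuit$ contains every zero of $\Delta(z)$ (as noted before \cref{lem:BKW}, and as is visible from the claim at the start of the proof of \cref{lem:club}) and also the point $x_A$, since every zero of $A(z)$ belongs to $\clubsuit$ by that same claim. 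Hence all zeros of $A(z)\Delta(z)$ lie in the region in question. As $A(z)=az+b$ is always real-rooted, this already establishes that $A(z)\Delta(z)$ is Hurwitz in \cref{itm:Hurwitz} and that $A(z)\Delta(z)$ is Schur stable in \cref{itm:SchurStable}, and it reduces \cref{itm:realrooted} to showing that $\Delta(x_A)<0$ and that $\Delta(z)$ is real-rooted whenever it is non-constant.

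To complete \cref{itm:realrooted}, I would inspect the six shapes in \cref{thm:shape:12}: each of the five shapes other than the one in \cref{itm:J} contains a circle, a vertical line, or an arc with the non-real endpoints $x_\Delta^\pm$, hence a non-real point, so the hypothesis $\clubsuit\subseteq\mathbb{R}$ forces $\clubsuit$ to have the form in \cref{itm:J}. By \cref{thm:club} this occurs precisely when $\Delta_\Delta\ge0$ and $B(x_A)<0$, and then $\clubsuit=J_\Delta$; in particular $\Delta(x_A)=A(x_A)^2+4B(x_A)=4B(x_A)<0$. A direct computation shows that the discriminant of the quadratic $\Delta(z)=(a^2+4c)z^2+(2ab+4d)z+(b^2+4e)$ equals $\Delta_\Delta$; hence if $a^2+4c\ne0$ then $\Delta_\Delta\ge0$ forces $\Delta(z)$ to have two real zeros, if $a^2+4c=0\ne2ab+4d$ then $\Delta(z)$ is linear, and if $\Delta(z)$ is constant then it equals $\Delta(x_A)=b^2+4e<0$, a negative constant.

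For \cref{itm:Hurwitz,itm:SchurStable} it remains to control the leading coefficient of $\Delta(z)$, and here I would argue by contradiction using $J_\Delta=\{x\in\mathbb{R}\colon\Delta(x)\le0\}\subseteq\clubsuit$. For \cref{itm:Hurwitz}: if the leading coefficient of $\Delta(z)$, that is, the first non-zero number among $a^2+4c$, $2ab+4d$ and $b^2+4e$, were nonpositive, then $\Delta(x)\le0$ for all sufficiently large $x>0$, so $J_\Delta$ would contain points of arbitrarily large positive real part, contradicting $\clubsuit\subseteq\{\Re z\le0\}$; hence this leading coefficient is positive. For \cref{itm:SchurStable}: Schur stability forces $\clubsuit$ to be bounded; if $a^2+4c<0$, or if $a^2+4c=0\ne2ab+4d$, then $J_\Delta$ is a union of rays or all of $\mathbb{R}$; in the remaining case $a^2+4c=2ab+4d=0$ one has $\Delta(z)\equiv b^2+4e$ with $b^2+4e\ne0$ (otherwise $A(z)$ and $B(z)$ would share a zero), so either $J_\Delta=\mathbb{R}$ or, if $b^2+4e>0$, a short computation gives $B'(x_A)=0$ and $B(x_A)=(b^2+4e)/4>0$, whence \cref{thm:club} shows $\clubsuit$ is the vertical line $\Re z=x_A$. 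In each subcase $\clubsuit$ is unbounded, a contradiction, so $a^2+4c>0$ and $\Delta(z)$ is quadratic with positive leading coefficient.

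I expect the only real work to be the bookkeeping of the degenerate cases $a^2+4c=0$ (and, within them, $2ab+4d=0$), where $\Delta(z)$ drops in degree and $\clubsuit$ must be located by hand; the conceptual core --- that eventual real-rootedness, the Hurwitz property, or Schur stability forces the corresponding containment, or boundedness, of $\clubsuit$, after which one merely reads off \cref{thm:club,thm:shape:12} --- is short and presents no genuine obstacle.
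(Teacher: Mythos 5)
Your proposal is correct, and its overall skeleton is the paper's: deduce from eventual real-rootedness/Hurwitzness/Schur stability that $\clubsuit$ lies in $\mathbb{R}$, the closed left half-plane, or the closed unit disk, and then read off \cref{thm:club}. The one genuine difference is how you get the stability of $A(z)\Delta(z)$ in \cref{itm:Hurwitz,itm:SchurStable}: you observe that every zero of $A(z)\Delta(z)$ lies in $\clubsuit$ (immediate from \cref{lem:BKW}, or from the claim $\clubsuit=\{z\colon A(z)\Delta(z)=0\text{ or }f(z)<-1\}$ in the proof of \cref{lem:club}), so the containment of $\clubsuit$ in the relevant closed region instantly makes $A(z)\Delta(z)$ Hurwitz or Schur stable. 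The paper instead argues case by case from the explicit description of $\clubsuit$ (identifying the rightmost point as $x_A$ or $x_\Delta^+$ in the Hurwitz case, and in the Schur case checking $x_A,x_\Delta^\pm\in D$ separately in the arc case, the $B(x_A)<0$ case using $x_A\in J_\Delta$, and the $B(x_A)>0$ case using $x_C\in J_\Delta$); your shortcut removes that bookkeeping. The remaining ingredients coincide: the sign of the leading coefficient of $\Delta(z)$ via unboundedness of $J_\Delta\subseteq\clubsuit$ (your treatment of the degenerate subcases $a^2+4c=0$, including $2ab+4d=0$ with $B'(x_A)=0$ and $B(x_A)=(b^2+4e)/4$, mirrors the computation in the proof of \cref{thm:shape:12}), and \cref{itm:realrooted} via $\clubsuit\subseteq\mathbb{R}\Rightarrow\Delta_\Delta\ge0,\ B(x_A)<0$, where you additionally make explicit---what the paper leaves implicit---that $\Delta_\Delta$ is the discriminant of $\Delta(z)$, so $\Delta(z)$ is real-rooted unless it is the negative constant $\Delta(x_A)$. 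Your detour through \cref{thm:shape:12} to rule out the non-real shapes is harmless but unnecessary, since \cref{thm:club} already gives it directly.
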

\begin{proof}
\noindent\cref{itm:realrooted}.
Suppose that the polynomials $W_n(z)$ are eventually real-rooted. Then $\clubsuit\subseteq\mathbb{R}$.
By \cref{thm:club}, this happens either when $\Delta_\Delta\ge0$ and $B(x_A)<0$, or when~$\Delta(z)$ is a negative constant.
In the former case, $\Delta(x_A)=4B(x_A)<0$; in the latter $\Delta(x_A)$ is the constant which is negative.

\noindent\cref{itm:Hurwitz}.
Suppose that the polynomials $W_n(z)$ are eventually Hurwitz.
Then $\clubsuit$ has no intersection with the open right half-plane.
By \cref{thm:club} and the definition of $J_\Delta$, 
either $x_A$ or $x_\Delta^+$ is a right most point in $\clubsuit$. 
This proves that $A(z)\Delta(z)$ is Hurwitz.
If the leading coefficient of $\Delta(z)$ is negative,
then $\clubsuit$ contains large real numbers,
which contradicts the Hurwitz condition.

\noindent\cref{itm:SchurStable}.
Suppose that the polynomials $W_n(z)$ are eventually Schur stable.
Then $\clubsuit\subseteq D$,
where $D$ is the closed unit disk.
By \cref{thm:club} and the definition of $J_\Delta$, 
if $a^2+4c\le0$, then the set $\clubsuit$ is unbounded, 
contradicting the Schur stable condition.
Thus $\Delta(z)$ is quadratic with a positive leading coefficient. 

If $\Delta_\Delta<0$, 
then $\clubsuit=\wideparen{x_\Delta^-x_Ax_\Delta^+}$ 
is an arc which is symmetric about the real axis.
Therefore, $\clubsuit\subseteq D$ is equivalent to say that $x_A\in D$
and $x_\Delta^\pm\in D$. In other words, $A(z)\Delta(z)$ is Schur stable.

Now we can suppose that $\Delta_\Delta\ge0$. 
If $B(x_A)<0$, 
then $\clubsuit=J_\Delta=[x_\Delta^-,\,x_\Delta^+]$ is an interval, 
and $\clubsuit\subseteq D$ is equivalent to $x_\Delta^\pm\in D$. 
It follows that $x_A\in J_\Delta\subseteq D$. Hence $A(z)\Delta(z)$ is Schur stable.
If $B(x_A)>0$, then $B'(x_A)\ne0$ and $\clubsuit=J_\Delta\cup C_0$,
where $C_0$ is the non-degenerate circle with a diameter from $x_A$ to $x_C$,
and $J_\Delta=[x_\Delta^-,\,x_\Delta^+]$ is an interval.
By the value of $\Delta(x_C)$, we have $x_C\in J_\Delta$.
Therefore, $\clubsuit\subseteq D$ is equivalent to say that,
again, $x_A\in D$ and $x_\Delta^\pm\in D$. 
\end{proof}

Before ending this paper,
we provide two examples satisfying \cref{rec2:linear}
which comes from topological graph theory; see also~\cite{WZ18X}.

\begin{example}\label{eg:4:2:16:0:0}\label{ex:ladder}
The ladder graph $L_n$ is the cartesian product of the complete graph~$K_2$
with the path $P_{n+2}$ on $n+2$ vertices. 
Chen and Gross~\cite[Theorem 3.1]{CG18} showed that
the Euler-genus polynomials $W_n(z)$ of $L_n$ satisfies the recurrence
\begin{equation}\label[rec]{rec:Euler-genus:ladder}
W_n(z)=(2+4z)W_{n-1}(z)+16z^2W_{n-2}(z),
\end{equation}
with $W_0(z)=1+z$ and $W_1(z)=2+6z+8z^2$.
It is direct to compute the discriminant
\[
\Delta(z)=4(1+4z+20z^2).
\]
By \cref{thm:spade,thm:club}, we obtain
$\spadesuit=\emptyset$
and $\clubsuit=\wideparen{x_\Delta^-x_Ax_\Delta^+}$ respectively, 
where $x_\Delta^\pm=-0.1\pm0.2i$ and $x_A=-0.5$.
The root distribution is illustrated in \cref{fig:3:-16:5:20}. 
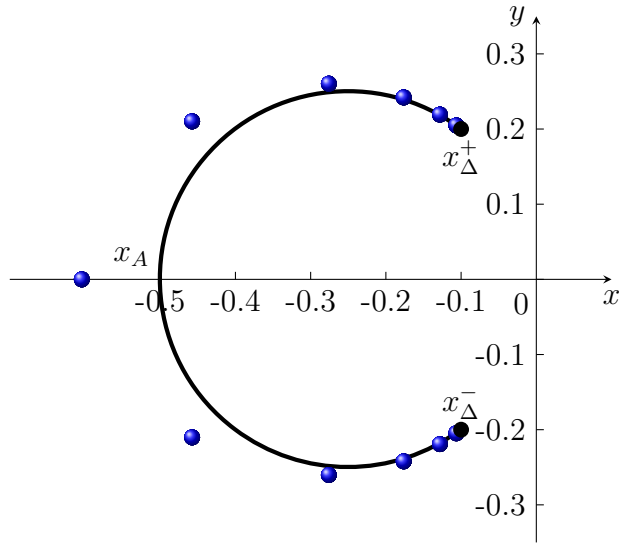
\begin{figure}[h]
\centering
\begin{tikzpicture}
\draw[->] (-7,0)--(1,0) node[below] {$x$};
\draw[->] (0,-3.5)--(0,3.5) node[left] {$y$};
\foreach \x in {0,1,...,6}
{\draw[yshift=\x cm] (0,-3) -- (0.1,-3);};
\foreach \x in {-5,-4,...,-1}
{\draw[xshift=\x cm] (0,0) -- (0,0.1);};
\node[below] at (-0.2,-0.05){0};
\node[below] at(-5,0){-0.5};
\node[below] at(-4,0){-0.4};
\node[below] at(-3,0){-0.3};
\node[below] at(-2,0){-0.2};
\node[below] at(-1,0){-0.1};
\node[left] at(0,-1){-0.1};
\node[left] at(0,-2){-0.2};
\node[left] at(0,-3){-0.3};
\foreach \y in {1,2,3} \node[left] at(0,\y){0.\y};
\draw[club] (-1,2) arc (53:307:2.5 cm);
\shade[ball color=blue] (-4.5756,2.1021) circle (3pt);
\shade[ball color=blue] (-4.5756,-2.1021) circle (3pt);
\shade[ball color=blue] (-2.7579,2.6028) circle (3pt);
\shade[ball color=blue] (-2.7579,-2.6028) circle (3pt);
\shade[ball color=blue] (-1.7627, 2.4209) circle (3pt);
\shade[ball color=blue] (-1.7627, -2.4209) circle (3pt);
\shade[ball color=blue] (-1.2815, 2.1922) circle (3pt);
\shade[ball color=blue] (-1.2815, -2.1922) circle (3pt);
\shade[ball color=blue] (-1.0632, 2.0477) circle (3pt);
\shade[ball color=blue] (-1.0632, -2.0477) circle (3pt);
\shade[ball color=blue] (-6.042, 0) circle (3pt);
\fill (-1,-2) circle (3pt);
\node[above] at (-1,-2){$x_\Delta^-$};
\fill (-1,2) circle (3pt);'
\node[below] at (-1,2){$x_\Delta^+$};
\node[left] at (-5,.3){$x_A$};
\end{tikzpicture}
\caption{
The zeros of the polynomial $W_{6}(z)$ in \cref{eg:4:2:16:0:0} are illustrated by balls. The ultra thick arc is the set $\clubsuit$ of non-isolated limits of zeros of the polynomials $W_n(z)$. }
\label{fig:3:-16:5:20}
\end{figure}
\end{example}

\begin{example}\label{ex:P4}
The Euler-genus polynomials
of certain $P_4$-linear graph-with-spiders
can be obtained by replacing the initial polynomials in \cref{eg:4:2:16:0:0} by
\[
W_0(z)=144(1+z)^4\quad\text{and}\quad
W_1(z)=64(1+z)^2(4+16z+31z^2+21z^3);
\]
see \cite[Subsection 4.5]{CG18}. 
It follows that $W_n(-1)=0$ for each $n$.
Consider the sequence $\mathcal{J}=\{J_n(z)\}_{n\ge0}$ of polynomials defined by
$J_n(z)=W_n(z)/(1+z)^2$.
Then the sequence~$\mathcal{J}$
satisfies \cref{rec:Euler-genus:ladder}.
The set of non-isolated limits of zeros of the polynomials in~$\mathcal{J}$ 
is the arc same to the one in \cref{eg:4:2:16:0:0}.
By \cref{thm:spade}, the set of isolated limits of zeros 
of the polynomials in $\mathcal{J}$ consists of a real zero of the polynomial 
$4+28z+169z^2+312z^3+171z^4$,
which equals $-0.8102$ approximately.
\end{example}

More examples can be found from 
Chen, Gross, Mansour and Tucker~\cite{CGMT16}.

\end{document}